\title{
Decentralized Learning for Multi-player Multi-armed Bandits
\thanks{Dileep Kalathil, Naumaan Nayyar and Rahul Jain ({\tt (manisser,nnayyar,rahul.jain) @usc.edu}) are with the Department of Electrical Engineering, University of Southern California, Los Angeles, CA, USA. This research is supported by AFOSR grant FA9550-10-1-0307 and NSF CAREER award CNS-0954116.}\\
\thanks{A preliminary version of this paper is under submission to IEEE CDC 2012. This version contains proofs of all theorems as well as new results on Markovian MABs.}
}
\author{
Dileep Kalathil, Naumaan Nayyar and Rahul Jain 
}
\begin{document}
\maketitle
\date{\today}

\begin{abstract}
We consider the problem of distributed online learning with multiple players in multi-armed bandits (MAB) models. Each player can pick among multiple arms. When a player picks an arm, it gets a reward.  We consider both i.i.d. reward model and Markovian reward model. In the i.i.d. model each arm is modelled as an i.i.d. process  with an unknown distribution with an unknown mean. In the Markovian model, each arm is modelled as a finite, irreducible, aperiodic and reversible Markov chain with an unknown probability transition matrix and stationary distribution. The arms give different rewards to different players. If two players pick the same arm, there is a ``collision'', and neither of them get any reward. There is no dedicated control channel for  coordination or communication among the players. Any other communication between the users is costly and will add to the regret. We propose an online index-based distributed learning policy called ${\tt dUCB_4}$ algorithm that trades off \textit{exploration v. exploitation} in the right way, and achieves expected regret that grows at most as  \textit{near}-$O(\log^2 T)$. The motivation comes from opportunistic spectrum access by multiple secondary users in cognitive radio networks wherein they must pick among various wireless channels that look different to different users. This is the first distributed learning algorithm for multi-player MABs to the best of our knowledge. 
\end{abstract}

\begin{keywords} 
Distributed adaptive control, multi-armed bandit, online learning, multi-agent systems.
\end{keywords}


\section{Introduction}\label{sec:intro}

In \cite{LaRo85}, Lai and Robbins introduced the classical non-Bayesian multi-armed bandit model. Such models capture the essence of the learning problem that players face in an unknown environment, where the players must not only \textit{explore} to learn but also \textit{exploit} in choosing the best arm. Specifically, suppose a player can choose between $N$ arms. Upon choosing an arm $i$, it gets a reward from a distribution with density $f(x,\theta_i)$. Time is slotted, and players do not know the distributions (nor any statistics about them). The problem is to find a learning policy that minimizes the expected \textit{regret} over some time horizon $T$. It was shown by Lai and Robbins \cite{LaRo85} that there exists an index-type policy that achieves expected regret that grows asymptotically as $\log T$, and this is order-optimal, i.e., there exists no causal policy that can do better. This was generalized by Anantharam, \textit{et al} \cite{AnVaWa87a} to the case of multiple plays, i.e., when the player can pick multiple arms at the same time. In \cite{Ag95},  Agrawal  proposed a sample mean based index policy which achieves $\log T$ regret asymptotically. Assuming that the rewards are coming from a distribution of bounded support, Auer, \textit{et al} \cite{AuCeFi02} proposed a much simpler  sample mean based index policy, called ${\tt UCB_{1}}$, which achieves $\log T$ uniformly over time, not only asymptotically. Also, unlike the policy in \cite{Ag95}, the index doesn't depend on the specific family of distributions that the rewards come from. 

 In \cite{AnVaWa87b}, Anantharam, \textit{et al}  proposed a policy  to the case where the arms are modelled as Markovian, not i.i.d. The rewards are assumed to come from a finite, irreducible and aperiodic Markov chain  represented by a single parameter probability transition matrix. The state of each arm evolves according to an underlying transition probability matrix when the arm is played and remains frozen when passive. Such problems are called \emph{rested Markovian bandit problems} (where \emph{rested} refers to no state evolution until the arm is  played). In \cite{TeLi10}, Tenkin and Liu extended the ${\tt UCB_{1}}$ policy to the case of rested Markovian bandit problems. If some non-trivial bounds on the underlying Markov chains are known a priori, they showed that the policy achieves $\log T$ regret uniformly over time. Also, if no information about the underlying Markov chains is available, the policy can easily be modified to get a \emph{near}-$O(\log T)$ regret asymptotically. The models in which the state of an arm continues to evolve even when it is not played are called \emph{restless Markovian bandit problems}. Restless models are considerably more difficult than the rested models and have been shown to be P-SPACE hard \cite{PaTs99}. This is because the optimal policy no longer will be to ``play the arm with the highest mean reward''. \cite{TeLi11}  employs a weaker notion of regret (\textit{weak regret}) which compares the reward of a policy to that of a policy which always plays the the arm with the highest mean reward. They propose a  policy which achieves $\log T$ (weak) regret uniformly over time if certain bounds on the underlying Markov model are known a priori and achieves a near-$O(\log T)$ (weak) regret asymptotically  when no such knowledge is available. \cite{DaGaKr11} proposes another  simpler policy which achieves the same bounds for weak regret. \cite{LiLiZh11} proposes a policy based on deterministic sequence of exploration and exploitation and achieves the same bounds for weak regret. In \cite{DaGaKrZh11}, the authors consider the notion of \textit{strong regret} and propose a  policy which achieves near-$\log T$ (strong) regret for some special cases of the restless model.


Recently, there is an increasing interest in multi-armed bandit models, partly because of opportunistic spectrum access problems. Consider a user who must choose between $N$ wireless channels. Yet, it knows nothing about the channel statistics, i.e., has no idea of how good or bad the channels are, and what rate it may expect to get from  each channel. The rates could be learnt by exploring various channels. Thus, these have been formulated as multi-armed bandit problems, and index-type policies have been proposed for choosing spectrum channels. In many scenarios, there are multiple users accessing the channels at the same time. Each of these users must be matched to a different channel. These have been formulated as a \textit{combinatorial multi-armed bandit problem} \cite{GaKrJa12} \cite{GaKrLi11}, and it was shown that an ``index-matching'' algorithm that at each instant determines a matching by solving a sum-index maximization problem achieves $O(\log T)$ regret uniformly over time, and this is indeed order-optimal. 

In other settings, the users cannot coordinate, and the problem must be solved in a decentralized manner. Thus, settings where all channels (arms) are identical for all users with i.i.d. rewards have been considered, and index-type policies that can achieve coordination have been proposed that get $O(\log T)$ regret uniformly over time \cite{LiZh10, AnMi11, GaKr11, LiLiZh11}. A similar result for Markovian reward model with weak regret has been shown by \cite{LiLiZh11}, assuming some non-trivial bounds  on the underlying Markov chains are known a priori.  The regret scales only polynomially in the number of users and channels. Surprisingly, the lack of coordination between the players asymptotically imposes no additional cost or regret. 

In this paper, we consider the decentralized multi-armed bandit problem with  distinct arms for each players. We consider both the i.i.d. reward model and the \emph{rested} Markovian reward model. All players together must discover the best arms to play as a team. However, since they are all trying to learn at the same time, they may collide when two or more pick the same arm. We propose an index-type policy ${\tt dUCB_4}$ based on a variation of the ${\tt UCB_1}$ index.
At its' heart is a distributed bipartite matching algorithm such as Bertsekas' auction algorithm \cite{Be92}. This algorithm operates in rounds, and in each round prices for various arms are determined based on bid-values. This imposes communication (and computation) cost on the algorithm that must be accounted for. Nevertheless, we show that when certain non-trivial bounds on the model parameters are known a priori, the ${\tt dUCB_4}$ algorithm that we introduce  achieves (at most) \textit{near}-$O(\log^2 T)$ growth non-asymptotically in expected regret. If no such information  about the model parameters are available, ${\tt dUCB_4}$ algorithm still achieves  (at most) \textit{near}-$O(\log^2 T)$ regret asymptotically. A lower bound, however, is not known at this point, and a work in progress.

The paper is organized as follows. In Section \ref{sec:model}, we present the model and problem formulation. In section \ref{sec:singleplayer} and \ref{sec:singleplayer:M} we present some variations on single player MAB with i.i.d. rewards and Markovian rewards respectively. In section \ref{sec:dmab}, we introduce the decentralized MAB problem with i.i.d. rewards. We then extend the results to the decentralized cases with Markovian rewards in section \ref{sec:dmab:M}. In section \ref{sec:dbm} we present the distributed bipartite matching algorithm which is used in our main algorithm for decentralized MAB. In section \ref{sec:simulations}, we present some simulation results to numerically evaluate the performance of our algorithm.  


\section{Model and Problem Formulation}\label{sec:model}

\subsection{Arms with i.i.d. rewards}
\label{subsec:model:iid} 

We consider an $N$-armed bandit with $M$ players. In a wireless cognitive radio setting \cite{HoBh07}, each arm could correspond to a channel, and each player to a user who wants to use a channel. Time is slotted, and at each instant each player picks an arm.  There is no dedicated control channel for coordination among the players. So, potentially more than one players can pick the same arm at the same instant. We will regard that as a \textit{collision}. Player $i$ playing arm $k$ at time $t$ yields i.i.d. reward $S_{ik}(t) $ with univariate density function $f(s,\theta_{ik})$, where $\theta_{ik}$ is a parameter in the set $\Theta_{ik}$.  We will assume that the rewards are bounded, and without loss of generality lie in $[0,1]$. Let $\m_{i,k}$ denote the mean of $S_{ik}(t)$ w.r.t. the pdf $f(s,\th_{ik})$. We assume that the parameter vector $\th=(\th_{ij}, 1 \leq i \leq M, 1 \leq j \leq N) $ is unknown to the players, i.e., the players have no information about the mean, the distributions or any other statistics about the rewards from various arms other than what they observe while playing. We also assume that each player can only observe the rewards that they get. When there is a collision, we will assume that all players that choose the arm on which there is a collision get  zero reward. This could be relaxed where the players share the reward in some manner though the results do not change appreciably. 

Let $X_{ij}(t)$ be the reward that player $i$ gets from arm $j$ at time $t$. Thus, if player $i$ plays arm $k$ at time $t$ (and there is no collision), $X_{ik}(t) = S_{ik}(t)$, and $X_{ij}(t) = 0,  j \neq k$. Denote the action of player $i$ at time $t$ by $a_i(t) \in \sA := \{1, \ldots, N \}$. Then, the \textit{history} seen by player $i$ at time $t$ is $\sH_i(t) = \{ (a_i(1),X_{i,a_i(1)}(1)),\cdots, (a_i(t),X_{i,a_i(t)}(t))\}$ with $\sH_i(0)=\emptyset$. A \textit{policy} $\alpha_i = (\alpha_i(t))_{t=1}^{\infty}$ for player $i$ is a sequence of maps $\alpha_i(t):\sH_i(t) \to \sA$ that specifies the arm to be played at time $t$ given the history seen by the player.  Let $\sP(N)$ be the set of vectors such that
\begin{eqnarray*}
\sP(N) := \{ \mathbf{a} = (a_{1}, \ldots, a_{M}) : a_{i} \in \sA, a_{i} \neq a_{j}, \text{for}~ i \neq j\}.
\end{eqnarray*}
The players have a \textit{team objective}: namely over a time horizon $T$, they want to maximize the expected sum of rewards $\bbE[\sum_{t=1}^T \sum_{i=1}^{M} X_{i,a_i(t)}(t)]$ over some time horizon $T$.
If the parameters $\mu_{i,j}$ are known, this could easily be achieved by picking a bipartite matching
\begin{equation}
\label{eq:optmatching}
\mathbf{k}^{**} \in \arg \max_{\mathbf{k} \in \sP(N) } \sum_{i=1}^{M} \mu_{i,k_{i}},
\end{equation}
i.e., the optimal bipartite matching with expected reward from each match. Note that this may not be unique. Since the expected rewards, $\mu_{i,j}$, are unknown, the players must pick learning policies that minimize the \textit{expected regret}, defined for policies $\alpha=(\alpha_i, 1 \leq i \leq M  )$ as
\begin{equation}
\label{eq:model:regret}
\sR_{\alpha}(T)=T\sum_i \mu_{i,k_i^{**}}-\bbE_{\alpha}\left[\sum_{t=1}^{T} \sum_{i=1}^{M} X_{i,\alpha_i(t)}(t)\right].
\end{equation}
Our goal is to find a decentralized algorithm that players can use such that together they minimize the expected regret.

\subsection{Arms with Markovian rewards}
\label{subsec:model:m}

Here we follow the model formulation introduced in the previous subsection, with the exception that the rewards are now considered Markovian. The reward that player $i$ gets from arm $j$ (when there is no collision) $X_{ij}$, is modelled as an irreducible, aperiodic, reversible Markov chain on a finite state space $\mathcal{X}^{i,j}$ and represented by a transition probability matrix $P^{i,j}:=\left(p^{i,j}_{x,x^{'}}: x, x^{'} \in \mathcal{X}^{i,j}\right)$. We assume that rewards are bounded and strictly positive, and without loss of generality lie in $(0, 1]$. Let $\mathbf{\pi}^{i,j}:=\left(\pi^{i,j}_{x}, x \in \mathcal{X}^{i,j}\right)$ be the stationary distribution of the Markov chain $P^{i,j}$. The mean reward from arm $j$ for player $i$ is defined as $\mu_{i,j}:=\sum_{x \in \mathcal{X}  ^{i,j} } x \pi^{i,j}_{x} $. Note that the Markov chain represented by $P^{i,j}$ makes a state transition only when player $i$ plays arm $j$. Otherwise it remains \emph{rested}. 

We note that although we use the `big $O$' notation to emphasis the regret order, unless otherwise noted results are non-asymptotic.


\section{Some variations on single player multi-armed bandit with i.i.d. rewards}
\label{sec:singleplayer}

We first present some variations on the single player non-Bayesian multi-armed bandit model. These will prove useful later for the multi-player problem though they should also be of independent interest.

\subsection{${\tt UCB_1}$ with index recomputation every $L$ slots}
Consider the classical single player non-Bayesian $N$-armed bandit problem. At each time $t$, the player picks a particular arm, say $j$, and gets a random reward $X_{j}(t)$. The rewards $X_{j}(t), 1 \leq t \leq T$ are independent and identically distributed according  to some unknown probability measure with an unknown expectation $\mu_{j}$. Without loss of generality, assume that $\mu_{1} > \mu_{i} > \mu_N,$ for  $i=2, \cdots N-1$. Let $n_{j}(t)$ denote the number of times  arm $j$ has been played  by time $t$. Denote $\Delta_{j}:= \mu_{1}-\mu_{j}$, $\Delta_{min}:=\min_{j, j \neq 1} \Delta_{j}$ and $\Delta_{max}:=\max_{j} \Delta_{j}$.  The regret for any policy $\alpha$ is 
\begin{equation}
\sR_{\alpha}(T) := \mu_{1} T - \sum_{j=1}^{N} \mu_{j} \bbE_{\alpha}[n_{j}(T)].
\end{equation}
${\tt UCB_1}$ index \cite{AuCeFi02} is defined as 
\begin{equation} 
g_{j}(t) := \overline{X}_{j}(t)+\sqrt{\frac{2 \log(t)} {n_{j}(t)}},
\end{equation}
where $\overline{X}_{j}(t)$ is the average reward obtained by playing arm $j$ by time $t$. It is defined as $\overline{X}_{j}(t) = \sum_{m=1}^{t} r_{j}(m)/n_{j}(t)$, where $r_{j}(m)$ is the reward obtained from arm $j$ at time $m$. If the arm $j$ is played at time $t$ then $r_{j}(m)=X_{j}(m)$ and otherwise $r_{j}(t)=0$. 
Now, an index-based policy called ${\tt UCB_1}$ \cite{AuCeFi02} is to pick the arm that has the highest index at each instant. It can be shown that this algorithm achieves regret that grows logarithmically in $T$ non-asymptotically. 

An easy variation of the above algorithm which will be useful in our analysis of subsequent algorithms is the following. Suppose the index is re-computed only once every $L$ slots. In that case, it is easy to establish the following.


\begin{theorem}
\label{thm:ucb1}
Under the ${\tt UCB_1}$ algorithm with recomputation of the index once every $L$ slots, the expected regret by time $T$ is given by
\begin{equation}
\sR_{\tt UCB_1}(T) \leq \sum_{j>1}^{N} \frac{8 L\log T}{\Delta_{j} } + L\left(1 + \frac{\pi^{2} }{3}\right) \sum_{j>1}^{N} \Delta_{j}.
\end{equation}
\end{theorem}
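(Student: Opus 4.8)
### Proof Proposal for Theorem \ref{thm:ucb1}

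The plan is to mimic the standard ${\tt UCB_1}$ regret analysis from Auer, Cesa-Bianchi, and Fischer \cite{AuCeFi02}, but tracking the extra factor of $L$ that arises because the index is frozen for blocks of $L$ consecutive slots. Write $T = KL + r$ with $0 \le r < L$, so there are at most $K+1 \le T/L + 1$ \emph{decision epochs}; at each epoch the player recomputes the ${\tt UCB_1}$ indices and commits to the maximizing arm for the next $L$ slots. Since $\sR_{{\tt UCB_1}}(T) = \sum_{j>1} \Delta_j \, \bbE[n_j(T)]$, it suffices to bound $\bbE[n_j(T)]$ for each suboptimal arm $j$. Because a suboptimal arm, once selected at an epoch, is played exactly $L$ times (or fewer in the final partial block), we have $n_j(T) \le L \cdot (\text{number of epochs at which arm } j \text{ is selected})$, so I only need to bound the expected number of \emph{bad epochs} for arm $j$.

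Next I would introduce the usual threshold $\ell_j := \lceil 8 \log T / \Delta_j^2 \rceil$ on the number of \emph{plays} of arm $j$, and argue as in \cite{AuCeFi02} that if arm $j$ is selected at an epoch starting at time $t$ while it has already been played at least $\ell_j$ times, then at least one of the two confidence-bound events must fail: either $\overline{X}_1(t) \le \mu_1 - \sqrt{2\log t / n_1(t)}$, or $\overline{X}_j(t) \ge \mu_j + \sqrt{2 \log t / n_j(t)}$ (the third event, involving $\Delta_j$ and $\ell_j$, is excluded by the choice of $\ell_j$). Here it is important that the index is evaluated at the epoch time $t$ using the counts $n_1(t), n_j(t)$ accumulated so far, and that the reward samples used are i.i.d.; since the reward sequence for each arm is i.i.d. regardless of when the samples are collected, the Chernoff–Hoeffding bound applies verbatim, giving probability at most $t^{-4}$ for each event. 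Summing a union bound over all epoch times (a subset of $\{1,\dots,T\}$) gives that the expected number of bad epochs beyond the first $\lceil \ell_j / L \rceil$ is at most $\sum_{t=1}^\infty 2 t \cdot t^{-4} \le \pi^2/3$.

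Putting the pieces together: the number of plays of arm $j$ is at most $L$ times the number of selecting epochs, which is at most $L \big( \lceil \ell_j / L \rceil + (\text{bad epochs}) \big)$. Using $\lceil \ell_j/L\rceil \le \ell_j/L + 1$ and $\ell_j \le 8\log T/\Delta_j^2 + 1$, we get
\begin{equation}
\bbE[n_j(T)] \le L\left( \frac{8 \log T}{L \Delta_j^2} + 1 \right) + L \cdot \frac{\pi^2}{3} = \frac{8 \log T}{\Delta_j^2} + L\left(1 + \frac{\pi^2}{3}\right),
\end{equation}
and multiplying by $\Delta_j$ and summing over $j > 1$ yields exactly the claimed bound. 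The main thing to be careful about — the only place where the block structure genuinely interacts with the argument — is the bookkeeping in the previous sentence: one must check that committing to arm $j$ for a full block of $L$ slots inflates the count by a factor $L$ relative to the single-slot case but does \emph{not} change the per-epoch failure probabilities, since those depend only on the i.i.d. sample means and the time index $t$, not on how many slots the arm will subsequently be played. I expect the rest to be routine; the decomposition $n_j(T) \le L \cdot (\#\text{epochs})$ and the observation that epoch times form a subset of $\{1,\dots,T\}$ (so the tail sum is unchanged) are the two small lemmas that make it go through.
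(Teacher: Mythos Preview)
Your proposal is correct and follows essentially the same approach the paper indicates (the paper omits the proof, saying only that it follows \cite{AuCeFi02} after accounting for each epoch-selection contributing $L$ plays). One minor remark: by thresholding on the number of \emph{plays} $n_j \ge \ell_j$ rather than on the number of epochs, your bookkeeping actually produces the sharper leading term $8\log T/\Delta_j$ (no $L$ factor) instead of the $8L\log T/\Delta_j$ in the stated bound, so your final display is \emph{stronger} than the theorem rather than ``exactly the claimed bound'' as you write --- which of course still implies the stated result; also, in the tail sum the union bound over both $s_1$ and $s_j$ gives a factor $t^2$, not $t$, so the summand should read $2t^{-2}$, and the value $\pi^2/3$ is then correct.
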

%

The proof follows \cite{AuCeFi02} and taking into account the fact that every time a suboptimal arm is selected, it is played for the next $L$ time slots.  We omit it due to space consideration.

\subsection{${\tt UCB_4}$ Algorithm when index computation is costly}

Often, learning algorithms pay a penalty or cost for computation. This is particularly the case when the algorithms must solve combinatorial optimization problems that are NP-hard. Such costs also arise in decentralized settings wherein algorithms pay a communication cost for coordination between the decentralized players. This is indeed the case, as we shall see later when we present an algorithm to solve the decentralized multi-armed bandit problem. Here, however, we will just consider an ``abstract'' communication or computation cost. The problem we formulate below can be solved with better regret bounds than what we present. At this time though we are unable to design algorithms with better regret bounds, that also help in decentralization.

Consider a computation cost every time the index is recomputed. Let the cost be $C$ units. Let $m(t)$ denote the number of times the index is computed by time $t$. Then, under policy $\alpha$ the expected regret is now given by
\begin{equation}
\label{eq:regret_singleuser}
\tilde{\sR}_{\alpha}(T) := \mu_{1}T - \sum_{j=1}^{N} \mu_{j} \bbE_{\alpha}[n_{j}(T)] + C\bbE_{\alpha}[m(T)].
\end{equation}
It is easy to argue that the ${\tt UCB_1}$ algorithm will give a regret $\Omega (T)$ for this problem. We present an alternative algorithm called ${\tt UCB_4}$ algorithm, that gives sub-linear regret. Define the ${\tt UCB_4}$ index  
\begin{equation}
g_{j}(t):=\overline{X}_{j}(t)+\sqrt{\frac{3 \log(t)} {n_{j}(t)}}.
\end{equation}
We define an arm $j^*(t)$ to be the \textit{best arm} if \( j^{*}(t) \in \arg \max_{1 \leq i \leq N } g_{i}(t).\)

\begin{algorithm}
\caption{: $\tt UCB_4$}
\label{algo:ucb4}
{\small
\begin{algorithmic}[1]
\STATE {\bf Initialization:} Select each arm $j$ once for $t \leq N$. Update the $\tt UCB_4$ indices. Set $\eta=1$.
\WHILE {($t \leq T$)}
\IF {($\eta = 2^{p}$ for some $p = 0, 1, 2, \cdots$)}
\STATE Update the index vector $g(t)$;
\STATE Compute the \textit{best arm} $ j^{*}(t)$;
\IF{$(j^*(t) \neq j^*(t-1))$}
\STATE Reset $\eta=1$;
\ENDIF
\ELSE 
\STATE $j^*(t) = j^*(t-1)$;
\ENDIF
\STATE Play arm $j^*(t)$;
\STATE Increment counter $\eta=\eta+1$; $t=t+1$;
\ENDWHILE
\end{algorithmic}
}
\end{algorithm}

We will use the following concentration inequality.\\
\noindent{{\bf Fact 1:} Chernoff-Hoeffding inequality} \cite{Po84}\\
Let $X_{1}, \ldots, X_{t}$ be random variables with a common range such that $\bbE[X_{t}|X_{1}, \ldots, X_{t-1}]=\mu$. Let $S_{t}=\sum_{i=1}^{t} X_{i}$. Then for all $a \geq 0$, 
\begin{eqnarray}
\bbP\left(S_{t} \geq t\mu + a\right) \leq e^{-2a^{2}/t}, ~~\text{and}~~  \bbP\left(S_{t} \leq t\mu - a\right)  \leq e^{-2a^{2}/t}.
\end{eqnarray} 
\begin{theorem}
\label{thm:ucb4_regret}
The expected regret for the single player multi-armed bandit problem with per computation cost $C$ using the ${\tt UCB_4}$ algorithm is given by 
\begin{eqnarray*}
\tilde{\sR}_{\tt UCB_4}(T)   &\leq & (\Delta_{max}+C (1+\log T)) \cdot \left(\sum_{j>1}^{N} \frac{12 \log T}{\Delta_{j}^{2} } + 2N\right).
\end{eqnarray*}
Thus, \( \tilde{\sR}_{\tt UCB_4}(T) = O(\log^{2} T).\)
\end{theorem}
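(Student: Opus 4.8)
The plan is to split $\tilde{\sR}_{\tt UCB_4}(T)$ into the usual ``play regret'' $\mu_1T-\sum_j\mu_j\bbE[n_j(T)]=\sum_{j>1}\Delta_j\bbE[n_j(T)]\le\Delta_{max}\sum_{j>1}\bbE[n_j(T)]$ and the ``computation regret'' $C\,\bbE[m(T)]$, to bound the number of suboptimal pulls $B:=\sum_{j>1}\bbE[n_j(T)]$ and the number of recomputations $\bbE[m(T)]$ separately, and to show $\bbE[m(T)]\le(1+\log T)\cdot O(B)$. Adding the two pieces gives the product form $\bigl(\Delta_{max}+C(1+\log T)\bigr)\bigl(\sum_{j>1}\tfrac{12\log T}{\Delta_j^2}+2N\bigr)$, and the $O(\log^2 T)$ statement is then immediate since $B=O(\log T)$.

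For $B$ I would run the ${\tt UCB_1}$ argument of \cite{AuCeFi02}, modified to reflect that ${\tt UCB_4}$ only \emph{decides} at the epoch boundaries $\eta\in\{1,2,4,\dots\}$ and then commits to a block of pulls of the arm it picked. Fix a suboptimal arm $j$ and set $\ell_j=\lceil 12\log T/\Delta_j^2\rceil$. Call a recomputation slot $t$ \emph{critical} (for $j$) if it selects $j$ while $n_j(t)\ge\ell_j$; at such a slot $g_j(t)\ge g_1(t)$ together with $\Delta_j\ge 2\sqrt{3\log t/n_j(t)}$ (which is exactly $n_j(t)\ge\ell_j$) forces one of $\{\overline X_1(t)\le\mu_1-\sqrt{3\log t/n_1(t)}\}$, $\{\overline X_j(t)\ge\mu_j+\sqrt{3\log t/n_j(t)}\}$, and Fact~1, after a union bound over the at most $t$ values of each of $n_1(t),n_j(t)$, gives each of these probability $O(t^{-4})$. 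The pulls of $j$ then divide into those occurring up to the end of the block that contains the $\ell_j$-th pull of $j$ --- there are $O(\ell_j)$ of these, the doubling rule overshooting $\ell_j$ by at most a constant factor --- and those occurring in blocks triggered by critical slots; since a block triggered at slot $t$ has length at most the current value of $\eta$, hence at most $t$, the expected total of the latter is at most $\sum_{t\le T}t\cdot O(t^{-4})=O(1)$. Summing over $j>1$ and adding the $N$ initialization pulls yields $B\le\sum_{j>1}\tfrac{12\log T}{\Delta_j^2}+2N$.

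For $\bbE[m(T)]$ I would exploit the epoch structure. A ``phase'' is a maximal run of slots spent on one best arm; within a phase of length $L$ the index is recomputed at $\eta\in\{1,2,4,\dots\}$, i.e.\ at most $1+\log_2 L\le 1+\log_2 T$ times, so $m(T)\le(1+\log_2 T)\cdot(\text{number of phases})$. Each phase on a suboptimal arm contains at least one suboptimal pull, so there are at most $B$ of them; each phase on the optimal arm is flanked by suboptimal phases, so there are at most $B+1$ of those; hence the number of phases is at most $2B+1$ and $\bbE[m(T)]\le(1+\log T)\,O(B)$. Combining with the play-regret bound gives the theorem: in words, each ``unit of badness'' --- one extra suboptimal pull, or one extra short phase --- costs at most $\Delta_{max}$ in play regret and at most $C(1+\log T)$ in recomputation cost, and there are $O\bigl(\sum_{j>1}\tfrac{12\log T}{\Delta_j^2}+2N\bigr)$ such units.

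The main obstacle is this bookkeeping of the epoch structure rather than any new probabilistic inequality: unlike in plain ${\tt UCB_1}$, a single bad \emph{decision} commits the algorithm to a whole block of pulls, so one must show that rare bad decisions still entail few bad \emph{pulls} and few recomputations. The doubling schedule is precisely what makes that implication cost only a constant factor for $B$ and an extra $\log T$ factor for $m(T)$; tightening the resulting constants to the exact form above is where the remaining effort lies.
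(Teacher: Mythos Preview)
Your proposal is correct and follows essentially the same route as the paper: split into play regret plus computation regret, bound $\sum_{j>1}\bbE[n_j(T)]$ by the Auer--Cesa-Bianchi--Fischer three-event decomposition with $\ell_j=\lceil 12\log T/\Delta_j^2\rceil$ (adapted so that a bad decision at a doubling epoch is charged the length $2^p$ of the ensuing block), and bound $\bbE[m(T)]$ via the phase structure. The one sharpening the paper uses that you may want for the exact constants is to split $m(T)=m_1(T)+m_2(T)$ by whether the recomputation outputs the optimal arm or not: every recomputation that outputs a suboptimal arm is immediately followed by at least one suboptimal pull, so $m_2(T)\le\sum_{j>1}n_j(T)$ \emph{without} a $\log T$ factor, while only the optimal phases need the $\log T$-per-phase bound; this gives $\bbE[m(T)]\le(1+\log T)\sum_{j>1}\bbE[n_j(T)]$ directly and hence the product form as stated.
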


\begin{proof}
We prove this in two steps. First, we compute the expected number of times a suboptimal arm is played and then the expected number of times we recompute the index.

Consider any suboptimal arm $j>1$. Denote $c_{t,s} = \sqrt{3 \log t /s}$ and the indicator function of the event $A$ by $I\{A\}$. let $\t_{j,m}$ be the time at which the player makes the $m$th transition to the arm $j$ from  another arm and $\t_{j,m}^{'}$ be the time at which the player makes the $m$th transition from the arm $j$ to another arm. Let $\tilde{\tau}_{j,m}^{'} = \min\{\t_{j,m}^{'}, T\}$. Then,\\ 
$n_{j}(T) \leq  1 + \sum_{m=1}^{T}  (\tilde{\tau}_{j,m}^{'}-\tau_{j,m}) I\{\text{Arm} ~j~\text{is picked at time}~  \tau_{j,m}, \tau_{j,m} \leq T \} $
\begin{eqnarray}
\label{eq:singleusr_njbd_st1} 
&\leq & 1 + \sum_{m=1}^{T}  (\tilde{\tau}_{j,m}^{'}-\tau_{j,m}) I\{ g_{j}(\tau_{j,m} -1) \geq  g_{1}(\tau_{j,m}-1), \tau_{j,m} \leq T  \} \nonumber \\
&\leq & l + \sum_{m=1}^{T}  (\tilde{\tau}_{j,m}^{'}-\tau_{j,m}) I\{ g_{j}(\tau_{j,m} -1) \geq  g_{1}(\tau_{j,m}-1), \tau_{j,m} \leq T, n_{j}(\tau_{j,m}-1) \geq l  \} \nonumber \\
& \stackrel{(a)}{\leq}&   l + \sum_{m=1}^{T} \sum_{p=0}^{\infty} 2^{p} I\{ g_{j}(\tau_{j,m} +2^{p}-2) \geq   g_{1}(\tau_{j,m} +2^{p}-2), \tau_{j,m} + 2^{p} \leq T,   n_{j}(\tau_{j,m}-1) \geq l\} \nonumber \\
& \stackrel{(b)}{\leq}&  l + \sum_{m=2}^{T} \sum_{p=0}^{\infty} 2^{p} I\{ g_{j}(m +2^{p}-2) \geq   g_{1}(m +2^{p}-2), m+2^{p} \leq T,   n_{j}(m-1) \geq l\} \nonumber \\
& \leq & l + \sum_{m=1}^T \sum_{p \geq 0, m+2^{p} \leq T} 2^{p} I\{ \overline{X}_{j}(m+2^{p}-1)  + c_{m+2^{p}-1, n_{j}(m+2^{p}-1)}  \geq \nonumber \\
~~&& \hspace{4cm} \overline{X}_{1}(m+2^{p}-1)  + c_{m+2^{p}-1, n_{1}(m+2^{p}-1)}, n_{j}(m-1) \geq l  \}\\ \nonumber 
& \leq & l + \sum_{m=1}^T \sum_{p \geq 0, m+2^{p} \leq T} 2^{p}  I\{ \max_{l \leq s_{j} < m+2^{p} }   \overline{X}_{j}(m+2^{p}-1)+c_{m+2^{p}-1, s_{j}}  \geq \nonumber \\
~~&& \hspace{4cm}  \min_{1 \leq s_{1} < m+2^{p} }   \overline{X}_{1}(m+2^{p}-1)+c_{m+2^{p}-1, s_{1}} \} \nonumber \\
&\leq & l + \sum_{m=1}^{\infty } \sum_{p \geq 0, m+2^{p} \leq T} 2^{p} \sum_{s_{1}=1}^{m+2^{p}} \sum_{s_{j}=l}^{m+2^{p}}   I\{\overline{X}_{j}(m+2^{p}) +c_{m+2^{p}, s_{j}}  \geq  \overline{X}_{1}(m+2^{p})+c_{m+2^{p}, s_{1}} \}.
\end{eqnarray}
In Algorithm \ref{algo:ucb4} ($\tt UCB_{4}$), if an arm is for the $p$th time consecutively (without switching to any other arms in between), it is be played for the next $2^{p}$ slots. Inequality (a) uses this fact. In the inequality (b), we replace $\tau_{j,m}$ by $m$ which is clearly an upper bound.  
Now, observe that the event $\{  \overline{X}_{j}(m+2^{p})+c_{m+2^{p}, s_{j}}  \geq \overline{X}_{1}(m+2^{p})+c_{m+2^{p}, s_{1}} \}$ implies at least one of the following events,
\begin{eqnarray}
\label{events_singleuser}
A:=\big\{ \overline{X}_{1}(m+2^{p}) &\leq & \mu_{1} - c_{m+2^{p}, s_{1}} \big\} , \hspace{0.5cm}  B:=\big\{\overline{X}_{j}(m+2^{p}) \geq  \mu_{j} + c_{m+2^{p}, s_{j}}\big\}, \nonumber  \\
&& \text{or}~ C:=\big\{\mu_{1} < \mu_{j} + 2 c_{m+2^{p}, s_{j}}\big\}. 
\end{eqnarray}
Now, using the Chernoff-Hoeffding bound, we get
\begin{eqnarray*}
\bbP\left(\overline{X}_{1}(m+2^{p}) \leq  \mu_{1} - c_{m+2^{p}, s_{1}}\right) \leq (m+2^{p})^{-6},~\bbP\left(\overline{X}_{j}(m+2^{p}) \geq  \mu_{j} + c_{m+2^{p}, s_{j}}\right) \leq (m+2^{p})^{-6}.
\end{eqnarray*}
For $l = \left \lceil \frac{ 12 \log T}{ \Delta^{2}_{j} } \right \rceil $, the last event in (\ref{events_singleuser}) is false. In fact, $\mu_{1} - \mu_{j} - 2 c_{m+2^{p}, s_{j}}$
\begin{eqnarray}
= \mu_{1} - \mu_{j} - 2 \sqrt{3 \log(m+2^{p})/s_{j}} && \geq \mu_{1} - \mu_{j} - \Delta_{j} = 0,~ \text{for}~ s_{j} \geq \left \lceil 12 \log T/\Delta_{j}^{2} \right \rceil. \nonumber \\
\text{So, we get,}~ \bbE[n_{j}(T)] \leq  \left \lceil 12 \log T/\Delta_{j}^{2} \right \rceil  &+& \sum_{m=1}^{\infty } \sum_{p=0}^{\infty} 2^{p} \sum_{s_{1}=1}^{m+2^{p}} \sum_{s_{j}=1}^{m+2^{p}} 2 (m+2^{p})^{-6} \nonumber \\
\leq  \left \lceil 12 \log T/\Delta_{j}^{2} \right \rceil &+& 2 \sum_{m=1}^{\infty } \sum_{p=0}^{\infty} 2^{p}  (m+2^{p})^{-4}  \leq \frac{ 12 \log T}{ \Delta^{2}_{j} } + 2.\label{eq:ucb4:nj}
\end{eqnarray}

Next, we upper-bound the expectation of  $m(T)$, the number of index computations performed by time $T$. We can write $m(T) = m_1(T) + m_2(T)$, where $m_1(T)$ is the number of index updates that result in an optimal allocation, and $m_2(T)$ is the number of index updates that result in a suboptimal allocation. Clearly, the number of updates resulting in a suboptimal allocation is less than the number of times a suboptimal arm is played. Thus, 
\begin{equation}
\label{eq:ucb4:m2}
\bbE [m_2(T)] \leq \sum_{j>1}^N\bbE[n_{j}(T)].
\end{equation}
To bound $\bbE [m_1(T)]$, let $\t_l$ be the time at which the player makes the $l$th transition to an optimal  arm from a suboptimal arm and $\t_l'$ be the time at which the player makes the $l$th transition from an optimal arm to a suboptimal arm. Then, $ m_1(T) \leq \sum_{l=1}^{n_{sub}(T)} \log | \t_l - \t_l' |$, 
where $n_{sub}(T)$ is the total number of such transitions by time $T$. Clearly, $n_{sub}(T)$ is upper-bounded by the total number of times the player picks a sub-optimal arm. Also, $\log | \t_l - \t_l' | \leq \log T$.   So, 
\begin{equation}
\label{eq:ucb4:m1}
\bbE [m_1(T)] \leq \sum_{j>1}^{N} \bbE [n_j(T)] \cdot \log T.
\end{equation}
Thus, from bounds (\ref{eq:ucb4:m2}) and (\ref{eq:ucb4:m1}), we get
\begin{equation}
\label{eq:ucb4:m}
\bbE [m(T)] \leq \sum_{j>1}^{N} \bbE [n_{j}(T)] \cdot (1+\log T).
\end{equation}
Now, using equation (\ref{eq:regret_singleuser}), the expected regret is
\begin{eqnarray*}
\tilde{\sR}_{\tt UCB_4}(T) &=& \sum_{j>1}^{N} \bbE [n_j(T)] \cdot \Delta_{j} + C \bbE[m(T)] \leq \Delta_{max} \sum_{j>1}^{N} \bbE [n_j(T)] + C \bbE[m(T)] \\
&\leq & \left( \Delta_{max}  + C (1+\log T) \right) \sum_{j>1}^{N} \bbE [n_j(T)].
\end{eqnarray*}
by using  (\ref{eq:ucb4:m}). Now, by bound (\ref{eq:ucb4:nj}), we get the desired bound on the expected regret.
\end{proof}

\noindent \textbf{Remarks.} 1. It is easy to show that the lower bound for the single player MAB problem with computation costs is $ \Omega (\log T)$. This can be achieved by the ${\tt UCB_2}$ algorithm \cite{AuCeFi02}. To see this, note that the number of times the player selects a suboptimal arm when using ${\tt UCB_2}$ is $O (\log T)$. Since $\bbE [n_j(T)] = O (\log T)$,  we get
\( \bbE [ \sum_{j>1}^{N} n_j(T) ] =  O(\log T), \)
and also \( \bbE [m_2(T)]  = O(\log T). \)
Now, since the epochs are not getting reset after every switch and are exponentially spaced,  the number of updates that result in the optimal allocation, \( m_1(T) \leq \log T. \) These together yield
\begin{equation*}
\tilde{\sR}_{\tt UCB_2}(T)  \leq \sum_{j>1}^{N} \bbE [n_j(T)] \cdot \Delta_j + C \bbE[m(T)] = O(\log T).
\end{equation*}
2. Variations of the ${\tt UCB_2}$ algorithm  that use a deterministic schedule can also be used \cite{LiZh11}. But it is unknown at this time if these can be used in solving the decentralized MAB problem that we introduce in the next section. This is the main reason for introducing the ${\tt UCB_4}$ algorithm.

\subsection{Algorithms with finite precision indices}
\label{subsec:limitedprec}
Often, there might be a cost to compute the indices to a particular precision. In that case, indices may be known upto some $\e$ precision, and it may not possible to tell which of two indices is greater if they are within $\e$ of each other. The question then is how is the performance of various index-based policies such as ${\tt UCB_1, UCB_4}$, etc. affected if there are limits on index resolution, and only an arm with an $\e$-highest index can be picked. We first show that if $\Delta_{min}$ is known, we can fix a precision $0 < \epsilon < \Delta_{min}$, so that ${\tt UCB_4}$ algorithm will achieve order log-squared regret growth with $T$. If $\Delta_{min}$ is not known, we can pick a positive monotone sequence $\{\epsilon_{t}\}$  such that $\e_t \to 0$, as $t \to \infty$. Denote the cost of computation for $\e$-precision be $C(\e)$. We assume that  $C(\e) \rightarrow  \infty $ monotonically as $\epsilon \rightarrow 0$. 

\begin{theorem}\label{thm:ucb4_cmin}
(i) If $\Delta_{min}$ is known, choose an $0 < \e < \D_{min}$. Then, the expected regret of the ${\tt UCB_4}$ algorithm with $\e$-precise computations is given by
\begin{eqnarray*}
\tilde{\sR}_{\tt UCB_4}(T) &\leq&  \left(\Delta_{max}+C(\epsilon) (1+ \log T )\right) \cdot  \left( \sum_{j>1}^{N} \frac{12 \log T}{(\Delta_{j} - \epsilon)^{2} } + 2N\right).
\end{eqnarray*}
Thus, $\tilde{\sR}_{\tt UCB_4}(T) = O(\log^{2} T).$\\
(ii) If $\Delta_{min}$ is  unknown, denote $\e_{min}=\D_{min}/2$ and choose a positive monotone sequence $\{\e_t\}$ such that $\e_t \to 0$ as $t \to \infty$. Then, there exists a $t_0 > 0$ such that for all $T > t_0$,
\begin{eqnarray*}
\tilde{\sR}_{\tt UCB_4}(T)   &\leq&   \left( \Delta_{max} + C(\epsilon_{min}) \right) t_{0}  +  (\Delta_{max}+C(\epsilon_{T}) (1+ \log T )) \cdot  \left( \sum_{j>1}^{N} \frac{12 \log T}{(\Delta_{j} - \epsilon_{min})^{2} } + 2N\right)
\end{eqnarray*}
where $t_{0}$ is the smallest $t$ such that $\epsilon_{t_{0}} < \epsilon_{min}$. Thus by choosing an arbitrarily slowly increasing sequence $\{\epsilon_{t}\}$, we can make the regret arbitrarily close to $O(\log^{2} T)$ asymptotically. 
\end{theorem}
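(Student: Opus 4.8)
The plan is to revisit the proof of Theorem~\ref{thm:ucb4_regret} and isolate the single place where finite precision alters the argument, then glue two regimes together for part~(ii).

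\textbf{Part (i).} The only structural change to Algorithm~\ref{algo:ucb4} under $\epsilon$-precise computations is in the comparison step: a suboptimal arm $j$ can be declared the \emph{best arm} at time $t$ only when $g_j(t) \geq g_1(t) - \epsilon$, rather than $g_j(t) \geq g_1(t)$. I would rerun verbatim the chain of inequalities in the proof of Theorem~\ref{thm:ucb4_regret} that bounds $n_j(T)$; the epoch bookkeeping (reset-on-switch, doubling of block lengths) is untouched, so inequalities (a) and (b) there go through unchanged, and we reach the same union bound except that the comparison event is now $\{\overline{X}_j(m+2^p)+c_{m+2^p,s_j} \geq \overline{X}_1(m+2^p)+c_{m+2^p,s_1}-\epsilon\}$. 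This event implies $A$, $B$ (exactly as in (\ref{events_singleuser}), with the same Chernoff--Hoeffding bounds $(m+2^p)^{-6}$), or the modified third event $C_\epsilon := \{\mu_1 < \mu_j + 2c_{m+2^p,s_j}+\epsilon\}$. Since $\epsilon < \Delta_{min} \leq \Delta_j$, the quantity $\Delta_j-\epsilon$ is positive and $l := \lceil 12\log T/(\Delta_j-\epsilon)^2\rceil$ is well defined; for $s_j \geq l$ one has $\mu_1 - \mu_j - \epsilon - 2c_{m+2^p,s_j} \geq 0$, so $C_\epsilon$ is false, and summing the $A,B$ terms as before gives $\bbE[n_j(T)] \leq 12\log T/(\Delta_j-\epsilon)^2 + 2$. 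The bound on $\bbE[m(T)]$ is identical to that in Theorem~\ref{thm:ucb4_regret}, namely $\bbE[m(T)] \leq \sum_{j>1}\bbE[n_j(T)](1+\log T)$, since it depends only on the epoch structure and not on the precision. Plugging into $\tilde{\sR}_{{\tt UCB_4}}(T) = \sum_{j>1}\Delta_j\bbE[n_j(T)] + C(\epsilon)\bbE[m(T)] \leq (\Delta_{max}+C(\epsilon)(1+\log T))\sum_{j>1}\bbE[n_j(T)]$ yields the stated bound.

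\textbf{Part (ii).} Set $\epsilon_{min}=\Delta_{min}/2$ and let $t_0$ be the smallest index with $\epsilon_{t_0}<\epsilon_{min}$; since $\{\epsilon_t\}$ is positive and decreases to $0$, $t_0$ is a finite, horizon-independent constant, $\epsilon_t \geq \epsilon_{min}$ for $t<t_0$, and $\epsilon_t < \epsilon_{min} < \Delta_j$ for all $t \geq t_0$ and all $j>1$. I would split the horizon at $t_0$. Over the first $t_0$ slots the regret is bounded crudely: each slot loses at most $\Delta_{max}$ in reward and triggers at most one index computation of cost $C(\epsilon_t) \leq C(\epsilon_{min})$ (using that $C(\cdot)$ increases as its argument shrinks and $\epsilon_t \geq \epsilon_{min}$ there), so this block contributes at most $(\Delta_{max}+C(\epsilon_{min}))t_0$. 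For the slots after $t_0$ I would rerun the part~(i) argument with $\epsilon_{min}$ as the surrogate precision: whenever a suboptimal arm $j$ is picked at a time $t>t_0$ the precision is $\epsilon_t < \epsilon_{min}$, hence $\Delta_j-\epsilon_t > \Delta_j-\epsilon_{min} > 0$ and taking $l=\lceil 12\log T/(\Delta_j-\epsilon_{min})^2\rceil$ makes $C_{\epsilon_t}$ false; and for $t_0 \leq t \leq T$ the per-computation cost obeys $C(\epsilon_t) \leq C(\epsilon_T)$ because $\epsilon_t \geq \epsilon_T$. This reproduces the part~(i) estimate with $\Delta_j-\epsilon_{min}$ in place of $\Delta_j-\epsilon$ and $C(\epsilon_T)$ in place of $C(\epsilon)$. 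Summing the two pieces gives the claimed inequality, and since a more slowly decreasing $\{\epsilon_t\}$ makes $C(\epsilon_T)$ grow more slowly in $T$, the regret is $O(\log^2 T)$ up to an arbitrarily slowly growing factor.

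\textbf{Main obstacle.} The genuinely new work is the accounting in part~(ii): one must check that the fixed cutoff $t_0$ really absorbs \emph{all} the costly early behaviour — both the suboptimal plays and the expensive computations that occur while $\epsilon_t$ is still above $\epsilon_{min}$ — and that after $t_0$ the time-varying constants $\Delta_j-\epsilon_t$ and $C(\epsilon_t)$ can be uniformly replaced by $\Delta_j-\epsilon_{min}$ and $C(\epsilon_T)$ without breaking the union bound inherited from Theorem~\ref{thm:ucb4_regret}. In part~(i) the only delicate point is that $\epsilon<\Delta_{min}$ is precisely what keeps $\lceil 12\log T/(\Delta_j-\epsilon)^2\rceil$ finite and the event $C_\epsilon$ eventually false; the remainder is a transcription of the earlier proof.
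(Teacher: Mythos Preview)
Your proposal is correct and follows essentially the same approach as the paper. The only cosmetic difference is that the paper splits the modified comparison event into four pieces $A$, $B':=\{\overline{X}_j \geq \mu_j+\epsilon+c_{m+2^p,s_j}\}$, $C$, $D:=\{\mu_1<\mu_j+\epsilon\}$ and then observes $B'\subseteq B$ and that $D$ is false when $\epsilon<\Delta_{min}$, whereas your three-event decomposition $A,B,C_\epsilon$ handles this in one stroke; the resulting bound on $\bbE[n_j(T)]$ and the treatment of part~(ii) are identical.
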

\begin{proof} 
(i) The proof is only a slight modification of the proof given in Theorem \ref{thm:ucb4_regret}. Due to the $\epsilon$ precision, the player will pick a suboptimal arm if the event $\{ \overline{X}_{j}(m+2^{p})+c_{m+2^{p}, s_{j}} + \epsilon  \geq \overline{X}_{1}(m+2^{p})+c_{m+2^{p}, s_{1}} \}$ occurs.  Thus equation (\ref{eq:singleusr_njbd_st1}) becomes, $n_{j}(T)$ \[\leq  l + \sum_{m=1}^{\infty } \sum_{p \geq 0, m+2^{p} \leq T} 2^{p} \sum_{s_{1}=1}^{m+2^{p}} \sum_{s_{j}=l}^{m+2^{p}}   I\{\overline{X}_{j}(m+2^{p}) +c_{m+2^{p}, s_{j}} + \epsilon \geq  \overline{X}_{1}(m+2^{p})+c_{m+2^{p}, s_{1}} \}.\]
Now, the event $\{ \overline{X}_{j}(m+2^{p})+c_{m+2^{p}, s_{j}} + \epsilon  \geq \overline{X}_{1}(m+2^{p})+c_{m+2^{p}, s_{1}} \}$  implies that at least one of the following events must occur:
\begin{eqnarray}
\label{events_singleuser_finres}
A:= \big\{\overline{X}_{1}(m+2^{p}) \leq \mu_{1} - c_{m+2^{p}, s_{1}}\big\} , && B:=\big\{ \overline{X}_{j}(m+2^{p})  \geq \mu_{j} + \epsilon + c_{m+2^{p}, s_{j}}\big\}, \nonumber \\
C:=\big\{ \mu_{1} < \mu_{j} + \epsilon + 2 c_{m+2^{p}, s_{j}}\big\}, && \text{or}~ D:=\big\{ \mu_{1} < \mu_{j} + \epsilon\big\}.
\end{eqnarray}
Since $\{ \overline{X}_{j}(m+2^{p})  \geq \mu_{j} + \epsilon + c_{m+2^{p}, s_{j}} \} \subseteq \{ \overline{X}_{j}(m+2^{p})  \geq \mu_{j} +  c_{m+2^{p}, s_{j}} \}$, we have
\begin{eqnarray*}
\bbP  (\{ \overline{X}_{j}(m+2^{p})  &\geq& \mu_{j} + \epsilon + c_{m+2^{p}, s_{j}} \}) \leq \bbP (\{ \overline{X}_{j}(m+2^{p})  \geq \mu_{j} +  c_{m+2^{p}, s_{j}} \}).
\end{eqnarray*}

Also, for $l = \left \lceil  12 \log T/(\Delta_{j} - \epsilon)^{2}\right \rceil $, the event $C$ cannot happen. In fact, 
$\mu_{1} - \mu_{j} - \epsilon - 2 c_{t+2^{p}, s_{j}} = \mu_{1} - \mu_{j} - \epsilon  - 2 \sqrt{\frac{3 \log (t+2^{p})} {s_{j}}} \geq  \mu_{1} - \mu_{j} - \epsilon - (\Delta_{j}-\epsilon) = 0,$
for $s_{j} \geq \left \lceil  12 \log T/(\Delta_{j} - \epsilon)^{2}\right \rceil$.  If $\epsilon < \Delta_{min}$, the last event (D) in equation (\ref{events_singleuser_finres}) is also not true.  Thus, for  $0 < \e < \Delta_{min}$, we get
 \begin{equation}
 \label{eq:ucb4_finres}
\mathbb{E}[n_j(T)] \leq \frac{12\log(n)}{(\Delta_{j} - \epsilon)^2} + 2.
\end{equation} The rest of the proof is the same as in Theorem \ref{thm:ucb4_regret}. 
Now, if $\Delta_{min}$ is known, we can choose $0 < \epsilon < \Delta_{min}$ and by Theorem \ref{thm:ucb4_regret} and bound (\ref{eq:ucb4_finres}), we get the desired result. 

\noindent (ii) If $\D_{min}$ is unknown, we can choose a positive monotone sequence $\{\e_t\}$ such that $\e_t \to 0$ as $t \to \infty$. Thus, there exists a $t_0$ such that for $t > t_{0}$, $\e_t < \epsilon_{min}$. We may get a linear regret upto time $t_0$ but after that the analysis follows that in the proof of Theorem \ref{thm:ucb4_regret}, and regret  grows only sub-linearly. Since $C(\cdot )$ is monotone, $C(\epsilon_{T}) > C(\epsilon_{t})$ for all $t < T$. The last part can now be trivially established using the obtained bound on the expected regret. 
\end{proof}

\section{Single Player Multi-armed Bandit with Markovian Rewards}
\label{sec:singleplayer:M}
Now, we consider the scenario where the rewards obtained from an arm are not i.i.d. but come from a Markov chain. Reward from each arm is modelled as an irreducible, aperiodic, reversible Markov chain on a finite state space $\mathcal{X}^{i}$ and represented by a transition probability matrix $P^{i} := \left( p^{i}_{x,x^{'}} : x, x^{'} \in \mathcal{X}^{i} \right) $.  Assume that the reward space $\mathcal{X}^{i} \subseteq (0, 1]$. Let  $X_{i}(1), X_{i}(2), \ldots$ denote the successive rewards from arm $i$.  All arms are mutually independent.  Let $\mathbf{\pi^{i}}:=\left(\pi^{i}_{x}, x \in \mathcal{X}^{i}\right)$ be the stationary distribution of the Markov chain $P^{i}$. Since the Markov chains are ergodic under these assumptions, the mean reward from arm $i$ is given by $\mu_{i} := \sum_{x \in \mathcal{X}  ^{i} } x \pi^{i}_{x} $. 
Without loss of generality, assume that $\mu_{1} > \mu_{i} > \mu_N,$ for  $i=2, \cdots N-1$. As before, $n_{j}(t)$ denotes the number of times arm $j$ has been played by time $t$. Denote $\Delta_{j}:=\mu_{1}-\mu_{j}$, $\Delta_{min}:=\min_{j, j \neq 1} \Delta_{j}$ and $\Delta_{max}:=\max_{j} \Delta_{j}$.  Denote $\pi_{min}:=\min_{1 \leq i \leq N, x \in \mathcal{X}^{i}} \pi^{i}_{x} $, $x_{max}:=\max_{1 \leq i \leq N, x \in \mathcal{X}^{i} } x$ and $x_{min}:=\min_{1 \leq i \leq N, x \in \mathcal{X}^{i} } x$. Let ${\hat{\pi}^{i}}_{x}:=\max\{\pi^{i}_{x}, 1-\pi^{i}_{x} \}$ and $\hat{\pi}_{max}:= \max_{1 \leq i \leq N, x \in \mathcal{X}^{i} } {\hat{\pi}^{i}}_{x} $. Let  $|\mathcal{X}^{i}|$ denote the cardinality of the state space $\mathcal{X}^{i}$, $|\mathcal{X}|_{max}:=\max_{1 \leq i \leq N } |\mathcal{X}^{i}|$. Let $\rho^{i}$ be the  eigenvalue gap, $1 - \lambda_{2}$, where $\lambda_{2}$ is the second largest eigenvalue of the matrix ${P^{i}}^{2}$. Denote $\rho_{max}:= \max_{1 \leq i \leq N } \rho^{i}$ and $\rho_{min}:=\min_{1 \leq i \leq N } \rho^{i}$, where $\rho^{i}$ is the eigenvalue gap of the $i$th arm. 

The total reward  obtained by the time $T$ is then given by $S_{T}= \sum_{j=1}^{N} \sum_{s=1}^{n_{j}(T)} X_{j}(s)$. The regret for any policy $\alpha$ is defined as  
\begin{equation}
\label{eq:regret_singleuser_M} 
\tilde{\sR}_{M, \alpha}(T) := \mu_{1} T - \bbE_{\alpha } \sum_{j=1}^{N} \sum_{s=1}^{n_{j}(T)} X_{j}(s) + C \bbE_{\alpha }[m(T)]
\end{equation} where $C$ is the cost per computation and $m(T)$ is the number of times the index is computed by time $T$, as described in section \ref{sec:singleplayer}. Define the index 
\begin{equation} 
\label{markov_single_index}
g_{j}(t) := \overline{X}_{j}(t)+\sqrt{\frac{ \kappa \log(t)} {n_{j}(t)}},
\end{equation}
where $\overline{X}_{j}(t)$ is the average reward obtained by playing arm $j$ by time $t$, as defined in the previous section. $\kappa$ can be any constant satisfying $\kappa > 168 |\mathcal{X}|_{max}^{2}/\rho_{min}$.

 We introduce one more notation here. If $\mathcal{F}$ and $\mathcal{G}$ are two $\sigma$-algebras, then $\mathcal{F} \vee \mathcal{G}$ denotes the smallest $\sigma$-algebra containing $ \mathcal{F}$ and  $\mathcal{G}$. Similarly, if $\{\mathcal{F}_{t}, t=1, 2, \ldots\}$ is a collection of $\sigma$-algebras, then $\vee_{t \geq 1 } F_{t}$ denotes the smallest $\sigma-$algebra containing $\mathcal{F}_{1}, \mathcal{F}_{2}, \ldots$
 
The following can be  derived easily from Lemma \ref{lemma_AnVa1} \cite{AnVaWa87b}, reproduced in the appendix. 
\begin{lemma}
\label{lemma_AnVa2}
If the reward of each arm is given by a Markov chain satisfying the hypothesis of Lemma \ref{lemma_AnVa1}, then under any policy $\alpha$ we have
\begin{equation}
\label{eq:lemma_TeLi1}
\tilde{\sR}_{M, \alpha}(T) \leq  \sum_{j=2}^{N} \Delta_{j} \bbE_{\alpha }[n_{j}(T)] + K_{\mathcal{X}, P} + C \bbE_{\alpha }[m(T)]
\end{equation}
where $K_{\mathcal{X}, P}=\sum_{j=1}^{N}  \sum_{x \in \mathcal{X}^{j}} x/\pi^{j}_{min}$ and $\pi^{j}_{min} = \min_{x \in \mathcal{X}^{j} } \pi^{j}_{x}$
\end{lemma}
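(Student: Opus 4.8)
The plan is to reduce the statement to a purely algebraic rearrangement of the regret definition \eqref{eq:regret_singleuser_M} together with one application of Lemma \ref{lemma_AnVa1} per arm. First I would exploit that exactly one arm is played in each slot, so that $\sum_{j=1}^{N} n_j(T) = T$ holds deterministically, and therefore $\mu_1 T = \mu_1 \sum_{j=1}^{N} \bbE_{\alpha}[n_j(T)]$. Substituting into \eqref{eq:regret_singleuser_M} and grouping terms by arm gives
\[
\tilde{\sR}_{M,\alpha}(T) = \sum_{j=1}^{N}\Big(\mu_1\,\bbE_{\alpha}[n_j(T)] - \bbE_{\alpha}\Big[\sum_{s=1}^{n_j(T)} X_j(s)\Big]\Big) + C\,\bbE_{\alpha}[m(T)].
\]
For each $j$ I would then split $\mu_1\,\bbE_{\alpha}[n_j(T)] = \Delta_j\,\bbE_{\alpha}[n_j(T)] + \mu_j\,\bbE_{\alpha}[n_j(T)]$; since $\Delta_1 = 0$, the arm $j=1$ contributes no $\Delta$-term, which is precisely the reason the final bound only sums over $j \ge 2$.

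The second step is to control, for every arm $j$, the fluctuation term $\mu_j\,\bbE_{\alpha}[n_j(T)] - \bbE_{\alpha}[\sum_{s=1}^{n_j(T)} X_j(s)]$. This is where the \emph{rested} structure of the model is used: because arm $j$'s chain is frozen whenever the arm is idle, the reward sequence $X_j(1), X_j(2), \ldots$ actually revealed from arm $j$ is a genuine sample path of the Markov chain $P^j$, and $n_j(T)$ is simply the random, adapted number of steps of that path uncovered by time $T$, no matter how the policy interleaves plays of arm $j$ with plays of the other arms. This is exactly the regime covered by Lemma \ref{lemma_AnVa1}, which bounds the gap between the expected accumulated reward over $n_j(T)$ plays and $\mu_j\,\bbE_{\alpha}[n_j(T)]$ by $\sum_{x \in \mathcal{X}^j} x/\pi^{j}_{min}$, uniformly over such play patterns. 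Summing this constant over all $N$ arms yields exactly $K_{\mathcal{X},P}$, and combining with the first step produces the claimed inequality.

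The only genuinely delicate point is this second step: one must verify that the hypotheses of Lemma \ref{lemma_AnVa1} are met, i.e., that $n_j(T)$ is a well-behaved random number of plays with respect to the natural filtration generated by arm $j$'s chain (possibly enlarged by the histories of the other arms), and that the policy's interleaving does not disturb the Markov structure of arm $j$'s reward process. In the rested model this is immediate, but it is the precise place where the argument would break for restless arms, so I would spell it out rather than treat it as routine. Everything else is bookkeeping: boundedness of the rewards in $(0,1]$ guarantees that all the expectations above are finite, and $m(T) \le T$ keeps the computation-cost term finite as well.
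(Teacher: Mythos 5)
Your proposal is correct and follows essentially the same route as the paper: both decompose the regret via $\sum_j n_j(T) = T$ into the $\sum_{j\ge 2}\Delta_j\,\bbE[n_j(T)]$ term plus a per-arm fluctuation term $\mu_j\,\bbE[n_j(T)] - \bbE[\sum_{s=1}^{n_j(T)} X_j(s)]$, and both control the latter by writing the accumulated reward as $\sum_x x\,N(x,n_j(T))$ and invoking Lemma \ref{lemma_AnVa1} with $n_j(T)$ as a stopping time of the filtration generated by arm $j$'s chain enlarged by the (independent) other arms. The stopping-time/rested-chain verification you flag as the delicate point is exactly the part the paper also makes explicit.
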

\begin{proof}
Let  $X_{j}(1), X_{j}(2), \ldots$ denote the successive rewards from arm $j$.  Let $\mathcal{F}^{j}_{t}$ denotes the $\sigma$-algebra generated by $\left(X_{j}(1), \ldots, X_{j}(t)\right)$. Let $\mathcal{F}^{j}=\vee_{t \geq 1}\mathcal{F}^{j}_{t}$ and $\mathcal{G}^{j}=\vee_{i \neq j } F^{i}$.  Since arms are independent, $\mathcal{G}^{j}$ is independent of $\mathcal{F}^{j}$. Clearly, $n_{j}(T)$ is a stopping time with respect to $\mathcal{G}^{j} \vee \mathcal{F}^{j}_{T}$. 
The total reward is $S_{T} =  \sum_{j=1}^{N} \sum_{s=1}^{n_{j}(T)} X_{j}(s) =  \sum_{j=1}^{N}  \sum_{x \in \mathcal{X}^{j}} x N(x, n_{j}(T) )$ where $N(x, n_{j}(T)) := \sum_{t=1}^{n_{j}(T)} I\{X_{j}(t)=x\} $.
Taking the expectation and using the Lemma \ref{lemma_AnVa1}, we have  $ \left \lvert \bbE[S_{T}] - \sum_{j=1}^{N}  \sum_{x \in \mathcal{X}^{j}} x \pi^{j}_{x} \bbE[n_{j}(T)] \right \rvert  \leq \sum_{j=1}^{N}  \sum_{x \in \mathcal{X}^{j}} x/\pi^{j}_{min}$, 
which implies \\ $\left \lvert \bbE[S_{T}] - \sum_{j=1}^{N}  \mu_{j} \bbE[n_{j}(T)] \right \rvert  \leq  K_{\mathcal{X}, P}, $
where $K_{\mathcal{X}, P} = \sum_{j=1}^{N}  \sum_{x \in \mathcal{X}^{j}} x/\pi^{j}_{min}$. Since regret  \\$\tilde{\sR}_{M, \alpha}(T) = \mu_{1} T - \bbE_{\alpha } \sum_{j=1}^{N} \sum_{t=1}^{n_{j}(T)} X_{j}(t) + C \bbE_{\alpha }[m(T)]$ (c.f. equation (\ref{eq:regret_singleuser_M})), we get
\begin{equation*}
|\tilde{\sR}_{M, \alpha}(T) - \left( \mu_{1} T - \sum_{j=1}^{N}  \mu_{j} \bbE[n_{j}(T)] + C \bbE_{\alpha}[m(T)] \right)| \leq K_{\mathcal{X},P}.
\end{equation*}
\end{proof}
We will use a concentration inequality for Markov chains (Lemma \ref{lemma_Le}, from \cite{Le98}), reproduced in the appendix.

\begin{theorem}
\label{thm:ucb4_regret_M}
(i) If  $|\mathcal{X}|_{max}$ and $\rho_{min}$ are known, choose $\kappa > 168 |\mathcal{X}|_{max}^{2}/\rho_{min}$. Then, the expected regret using the ${\tt UCB_4}$ algorithm with the index defined as in (\ref{markov_single_index})  for the single player multi-armed bandit problem with Markovian rewards and  per computation cost $C$  is given by 
\begin{eqnarray*}
\tilde{\sR}_{M, \tt UCB_4}(T)   &\leq & (\Delta_{max}+C (1+\log T)) \cdot \left(\sum_{j>1}^{N} \frac{4 \kappa \log T}{\Delta_{j}^{2} } + N (2D+1) \right) +  K_{\mathcal{X},P}
\end{eqnarray*}
where $D=\frac{|\mathcal{X}|_{max}}{\pi_{min} } $.
Thus, \( \tilde{\sR}_{M, \tt UCB_4}(T) = O(\log^{2} T).\) \\
(ii) If $|\mathcal{X}|_{max}$ and $\rho_{min}$ are not known, choose a positive monotone sequence $\{\kappa_{t}\}$ such that $\kappa_{t} \rightarrow \infty $ as $t \rightarrow \infty$ and $\kappa_{t} \leq t$.  Then, $\tilde{\sR}_{M, \tt UCB_4}(T) = O( \kappa_{T} \log^{2} T)$. Thus, by choosing an arbitrarily slowly increasing sequence  $\{\kappa_{t}\}$ we can make the regret arbitrarily close to  $\log^{2} T$.
\end{theorem}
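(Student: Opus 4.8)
The plan is to mirror the proof of Theorem~\ref{thm:ucb4_regret}, substituting the Chernoff--Hoeffding bound for the Markovian concentration inequality (Lemma~\ref{lemma_Le}) and paying the fixed additive price $K_{\mathcal{X},P}$ supplied by Lemma~\ref{lemma_AnVa2}. So first I would invoke Lemma~\ref{lemma_AnVa2} to reduce the regret to bounding $\sum_{j>1}\Delta_j\,\bbE[n_j(T)]$ and $C\,\bbE[m(T)]$, exactly as in the i.i.d.\ case. Then I would repeat the chain of inequalities culminating in \eqref{eq:singleusr_njbd_st1}, which is a deterministic combinatorial decomposition of $n_j(T)$ using the doubling structure of Algorithm~\ref{algo:ucb4} (consecutive play of an arm for $2^p$ slots, reset on switch) and is insensitive to whether rewards are i.i.d.\ or Markovian. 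The one change is that $c_{t,s}=\sqrt{\kappa\log t/s}$ with $\kappa>168|\mathcal{X}|_{\max}^2/\rho_{\min}$, and the union bound over the three events $A,B,C$ in \eqref{events_singleuser} still applies.

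The substantive step is estimating $\bbP(\overline{X}_1(m+2^p)\le\mu_1-c_{m+2^p,s_1})$ and $\bbP(\overline{X}_j(m+2^p)\ge\mu_j+c_{m+2^p,s_j})$. Here I would apply Lemma~\ref{lemma_Le}: for a reversible Markov chain with eigenvalue gap $\rho$ on a state space of size $|\mathcal{X}|$, a deviation of the empirical mean from $\mu$ by $a/n$ over $n$ samples has probability at most (something like) $c\,|\mathcal{X}|\,e^{-a^2\rho/(28 n |\mathcal{X}|^2)}$ up to constants and a $\hat\pi$-type prefactor. Plugging $a = n\,c_{n,s}$ type quantities and the choice of $\kappa$ forces the exponent to beat $(m+2^p)^{-2}$ (in the i.i.d.\ proof one gets $(m+2^p)^{-6}$; here the extra slack absorbs the polynomial prefactor $D=|\mathcal{X}|_{\max}/\pi_{\min}$ from the concentration bound, which is why the final constant carries the $N(2D+1)$ term rather than $2N$). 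With $l=\lceil 4\kappa\log T/\Delta_j^2\rceil$ the event $C$ in \eqref{events_singleuser} is again impossible since $2c_{m+2^p,s_j}\le\Delta_j$ for $s_j\ge l$, so summing the geometric-type series over $m,p$ yields $\bbE[n_j(T)]\le 4\kappa\log T/\Delta_j^2 + (2D+1)$, analogous to \eqref{eq:ucb4:nj}.

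Then $\bbE[m(T)]$ is bounded verbatim as in \eqref{eq:ucb4:m2}--\eqref{eq:ucb4:m}: updates giving a suboptimal match are at most $\sum_{j>1}\bbE[n_j(T)]$, and updates giving the optimal match are at most $\sum_{j>1}\bbE[n_j(T)]\cdot\log T$ because epochs between two consecutive suboptimal picks are at most $T$ long and the index is recomputed only at doubling instants within such an epoch. Combining through \eqref{eq:lemma_TeLi1} gives $\tilde{\sR}_{M,\tt UCB_4}(T)\le(\Delta_{\max}+C(1+\log T))\sum_{j>1}\bbE[n_j(T)]+K_{\mathcal{X},P}$, and inserting the $n_j$ bound produces part~(i). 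For part~(ii), when $|\mathcal{X}|_{\max}$ and $\rho_{\min}$ are unknown I would replace the fixed $\kappa$ by the sequence $\kappa_t\to\infty$ with $\kappa_t\le t$: there is a finite time $t_1$ after which $\kappa_t$ exceeds the threshold $168|\mathcal{X}|_{\max}^2/\rho_{\min}$, the regret up to $t_1$ is at most linear (hence $O(1)$ as a $T$-independent additive term), and beyond $t_1$ the analysis of part~(i) applies with $\kappa$ replaced by $\kappa_t$, so $\bbE[n_j(T)]=O(\kappa_T\log T)$ and $\tilde{\sR}_{M,\tt UCB_4}(T)=O(\kappa_T\log^2 T)$; choosing $\kappa_t$ growing arbitrarily slowly makes this arbitrarily close to $\log^2 T$.

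The main obstacle is the concentration step: unlike Chernoff--Hoeffding, the Markov-chain inequality of Lemma~\ref{lemma_Le} has a multiplicative prefactor (depending on $|\mathcal{X}|$, $\hat\pi_{\max}$, and a $1/\pi_{\min}$-type quantity) and the exponent is scaled by $\rho/|\mathcal{X}|^2$, so I must verify that the specific constant $\kappa>168|\mathcal{X}|_{\max}^2/\rho_{\min}$ is large enough that, after accounting for the prefactor, the tail probability is summable against the $\sum_m\sum_p 2^p(\cdot)$ weights — i.e.\ that $(m+2^p)^{-2}$ (or smaller) survives with room to spare — and that the leftover prefactor is what gets collected into the additive $N(2D+1)$ and $K_{\mathcal{X},P}$ constants rather than contaminating the $\log^2 T$ order. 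Checking this bookkeeping carefully, and confirming the $4\kappa$ (versus $12$) coefficient comes out of $s_j\ge 4\kappa\log T/\Delta_j^2\Rightarrow 2\sqrt{\kappa\log T/s_j}\le\Delta_j$, is the only delicate part; everything else is a transcription of the i.i.d.\ argument.
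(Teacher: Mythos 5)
Your proposal follows the paper's proof essentially verbatim: reduce via Lemma \ref{lemma_AnVa2}, reuse the deterministic decomposition (\ref{eq:singleusr_njbd_st1}) and the event split (\ref{events_singleuser}), replace Chernoff--Hoeffding by a per-state union bound fed into Lemma \ref{lemma_Le} (which produces the prefactor $D=|\mathcal{X}|_{max}/\pi_{min}$ and the exponent $\kappa\rho_{min}/28|\mathcal{X}|_{max}^2>6$ that the threshold $168$ is designed to guarantee), take $l=\lceil 4\kappa\log T/\Delta_j^2\rceil$ to kill event $C$, and bound $\bbE[m(T)]$ exactly as in (\ref{eq:ucb4:m}); part (ii) likewise matches the paper's handling of $\kappa_t$. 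The bookkeeping you flag as delicate is precisely what the paper checks, so the approach is correct and not materially different.
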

\begin{proof}
(i) Consider any suboptimal arm $j>1$. Denote $c_{t,s} = \sqrt{\kappa \log t /s}$. As in the proof of  Theorem \ref{thm:ucb4_regret}, we start by bounding $n_{j}(T)$. The initial steps are the same as in the proof of Theorem \ref{thm:ucb4_regret}. So,  we skip those steps and start from the inequality (\ref{eq:singleusr_njbd_st1}) there.  
\begin{eqnarray*}
n_{j}(T) &\leq & l + \sum_{m=1}^{\infty } \sum_{p \geq 0, m+2^{p} \leq T} 2^{p} \sum_{s_{1}=1}^{m+2^{p}} \sum_{s_{j}=l}^{m+2^{p}}  I\{\overline{X}_{j}(m+2^{p}) +c_{m+2^{p}, s_{j}}  \geq  \overline{X}_{1}(m+2^{p})+c_{m+2^{p}, s_{1}} \}.
\end{eqnarray*}
The event $\{\overline{X}_{j}(m+2^{p}) +c_{m+2^{p}, s_{j}}  \geq  \overline{X}_{1}(m+2^{p})+c_{m+2^{p}, s_{1}} \}$ is true only if at least one of the events shown in display (\ref{events_singleuser}) are true.
We note that, for any initial distribution $\lambda^{j}$ for arm $j$, \\

\begin{equation}
N_{\lambda^{j}}   = \left \lVert \left(\frac{\lambda^{j}_{x}}{\pi^{j}_{x}}, x \in \mathcal{X}^{j} \right) \right \rVert  _{2} \leq \sum_{x \in \mathcal{X}^{j}} \left \lVert \left(\frac{\lambda^{j}_{x}}{\pi^{j}_{x}}\right)  \right \rVert _{2} \leq \frac{1}{\pi_{min}}.
\end{equation}
Also, $x_{max} \leq 1$. Let $n^{j}_{x}(s_{j})$ be the number of times the state $x$ is observed when arm $j$ is pulled $s_{j}$ times. Then, the probability of the first event in (\ref{events_singleuser}), \\
$\bbP(\overline{X}_{j}(m+2^{p})  \geq \mu_{j} + c_{m+2^{p}, s_{j}})$
\begin{eqnarray*}
&& = \bbP\left( \sum_{x \in \mathcal{X}^{j}} x n^{j}_{x}(s_{j}) \geq s_{j} \sum_{x \in \mathcal{X}^{j}} x \pi^{j}_{x} + s_{j}  c_{m+2^{p}, s_{j}} \right) = \bbP\left( \sum_{x \in \mathcal{X}^{j}} (n^{j}_{x}(s_{j}) - s_{j} \pi^{j}_{x}    ) \geq s_{j}  c_{m+2^{p}, s_{j}}/x \right) \\
&& \stackrel{(a)}{\leq}  \sum_{x \in \mathcal{X}^{j}}  \bbP\left( n^{j}_{x}(s_{j}) - s_{j} \pi^{j}_{x} \geq \frac{s_{j} c_{m+2^{p}}  }{x |\mathcal{X}^{j}|  } \right)  = \sum_{x \in \mathcal{X}^{j}} \bbP\left( \frac{\sum_{t=1}^{s_{j}} I\{X_{j}(t)=x\} - s_{j} \pi^{j}_{x}  }{ s_{j} {\hat{\pi}^{j}}_{x}  } \geq \frac{ c_{m+2^{p}, s_{j}} }{x |\mathcal{X}^{j}| \hat{\pi}^{j}_{x}} \right) \\
&& \stackrel{(b)}{\leq}  \sum_{x \in \mathcal{X}^{j}}  N_{\lambda^{j}} (m+2^{p})^{- \kappa \rho^{i}/28 x^{2}|\mathcal{X}^{j}|^{2}  (\hat{\pi}^{j}_{x})^{2}} ~~~  \stackrel{(c)}{\leq}   \frac{|\mathcal{X}|_{max}}{\pi_{min} } (m+2^{p})^{- \kappa \rho_{min}/28 |\mathcal{X}|_{max}^{2}}.
\end{eqnarray*}
The inequality (a) follows after some simple algebra, which we skip due to space limitations. The inequality (b) follows by defining the function $f(X_{j}(t))=(I\{X_{j}(t)=x\} - \pi^{j}_{x})/ {\hat{\pi}^{j}}_{x} $ and using the Lemma \ref{lemma_Le}. For inequality (c) we used the facts that $N_{\lambda^{j}} \leq 1/\pi_{min}$, $x_{max} \leq 1$ and ${\hat{\pi}_{max}} \leq 1 $. Thus,
\begin{equation}
\label{eq:event1prob_m}
\bbP(\overline{X}_{j}(m+2^{p})  \geq \mu_{j} + c_{m+2^{p}, s_{j}}) \leq D  (m+2^{p})^{- \kappa \rho_{min}/28 |\mathcal{X}|_{max}|^{2} }
\end{equation}
where $D=\frac{|\mathcal{X}|_{max}}{\pi_{min} } $. Similarly we can get,
\begin{equation}
\label{eq:event2prob_m}
\bbP(\overline{X}_{1}(m+2^{p}) \leq \mu_{1} - c_{m+2^{p}, s_{1}}) \leq D  (m+2^{p})^{- \kappa \rho_{min}/28 |\mathcal{X}|_{max}|^{2}  }
\end{equation}
For $l = \left \lceil 4 \kappa \log T/\Delta^{2}_{j}\right \rceil $, the last event in (\ref{events_singleuser}) is false. In fact, $\mu_{1} - \mu_{j} - 2 c_{m+2^{p}, s_{j}}$
\begin{eqnarray*}
=\mu_{1} - \mu_{j} - 2 \sqrt{\kappa \log(m+2^{p})/s_{j}} \geq \mu_{1} - \mu_{j} - \Delta_{j} = 0, ~ \text{for}~ s_{j} \geq  \left \lceil 4 \kappa \log T/\Delta^{2}_{j}\right \rceil.  ~\text{Thus},
\end{eqnarray*}
\begin{eqnarray}
\label{eq:njbd_M}
\bbE[n_{j}(T)] &\leq & \left \lceil \frac{ 4 \kappa \log T}{ \Delta_{j}^{2} } \right \rceil  + \sum_{m=1}^{\infty } \sum_{p=0}^{\infty} 2^{p} \sum_{s_{1}=1}^{m+2^{p}} \sum_{s_{j}=1}^{m+2^{p}} 2 D  (m+2^{p})^{-\frac{ \kappa \rho_{min}}{28 |\mathcal{X}|_{max}|^{2}   }}  \nonumber 
\end{eqnarray}
\begin{eqnarray}
&& = \left \lceil \frac{ 4 \kappa \log T}{ \Delta_{j}^{2} } \right \rceil  + 2 D \sum_{m=1}^{\infty } \sum_{p=0}^{\infty} 2^{p}  (m+2^{p})^{- \frac{\kappa \rho_{min} - 56|\mathcal{X}|_{max}^{2}  }{ 28|\mathcal{X}|_{max}^{2} }  }.
\end{eqnarray}
When $\kappa > 168 |\mathcal{X}|_{max}^{2}/\rho_{min} $, the above summation converges to a value less that $1$ and  we get
\begin{equation}
\label{eq:ucb4:nj:M}
 \bbE[n_{j}(T)] \leq \frac{ 4 \kappa \log T}{ \Delta^{2}_{j} } + (2D+1).
\end{equation}

Now, from the proof of  Theorem \ref{thm:ucb4_regret} (equation (\ref{eq:ucb4:m})), 
\begin{equation}
\label{eq:ucb4:m:M}
\bbE [m(T)] \leq \sum_{j>1}^{N} \bbE [n_{j}(T)] \cdot (1+\log T).
\end{equation}
Now, using inequality (\ref{eq:lemma_TeLi1}), the expected regret $\tilde{\sR}_{M, \tt UCB_4}(T)=$
\begin{eqnarray*}
 &=& \sum_{j>1}^{N} \bbE [n_j(T)] \cdot \Delta_{j} + C \bbE[m(T)]  + K_{\mathcal{X}, P} \leq  \Delta_{max} \sum_{j>1}^{N} \bbE [n_j(T)] + C \bbE[m(T)] + K_{\mathcal{X}, P}\\
&\leq & \left( \Delta_{max}  + C (1+\log T) \right) \sum_{j>1}^{N} \bbE [n_j(T)] + K_{\mathcal{X}, P}.
\end{eqnarray*}
by using  (\ref{eq:ucb4:m:M}). Now, by bound (\ref{eq:ucb4:nj:M}), we get the desired bound on the expected regret. \\
(ii) Replacing $\kappa$ with $\kappa_{t}$, equation (\ref{eq:njbd_M}) becomes 
\begin{eqnarray*}
\bbE[n_{j}(T)] &\leq & \left \lceil \frac{ 4 \kappa_{T} \log T}{ \Delta_{j}^{2} } \right \rceil  +  2 D \sum_{m=1}^{\infty } \sum_{p=0}^{\infty} 2^{p}  (m+2^{p})^{- \frac{\kappa_{m+2^{p}} \rho_{min} - 56|\mathcal{X}|_{max}^{2}  }{ 28|\mathcal{X}|_{max}^{2} }  } 
\end{eqnarray*}
Since, $\kappa_{t} \rightarrow \infty$ as $t \rightarrow \infty$, 
the exponent ${-\frac{\kappa_{m+2^{p}} \rho_{min} - 56|\mathcal{X}|_{max}^{2}  }{ 28|\mathcal{X}|_{max}^{2} }   }$  becomes smaller that $-4$ for sufficiently large $m$ and $p$, and the above summation converges, yielding the desired result. 
\end{proof}
We note that we have used the results in \cite{TeLi12} in the above proof. 
We note that the results for Markovian reward just presented extend easily even with finite precision indices. As before, suppose the cost of computation for $\e$-precision is $C(\e)$. We assume that  $C(\e) \rightarrow  \infty $ monotonically as $\epsilon \rightarrow 0$.  We formally state the following result, which we will use in section \ref{sec:dmab:M}.

\begin{theorem}\label{thm:ucb4_cmin_M}
(i) If $\Delta_{min}$,  $|\mathcal{X}|_{max}$ and $\rho_{min}$ are known, choose an $0 < \e < \D_{min}$, and a $\kappa > 168 |\mathcal{X}|_{max}^{2}/\rho_{min}$. Then, the expected regret using the ${\tt UCB_4}$ algorithm with the index defined as in (\ref{markov_single_index})  for the single player multi-armed bandit problem with Markovian rewards with $\e$-precise computations is given by
\begin{eqnarray*}
\tilde{\sR}_{M, \tt UCB_4}(T) &\leq&  \left(\Delta_{max}+C(\epsilon) (1+ \log T )\right) \cdot  \left(\sum_{j>1}^{N} \frac{4 \kappa \log T}{(\Delta_{j}-\epsilon)^{2} } + N (2D+1) \right). 
\end{eqnarray*}
where $D=\frac{|\mathcal{X}|_{max}}{\pi_{min} } $.  Thus, $\tilde{\sR}_{M, \tt UCB_4}(T) = O(\log^{2} T).$\\
(ii) If $\Delta_{min}$,  $|\mathcal{X}|_{max}$ and $\rho_{min}$ are unknown, choose a positive monotone sequences $\{\e_t\}$ such that and $\{\kappa_{t}\}$ such that $\kappa_{t} \leq t$, $\e_t \to 0$ and $\kappa_{t} \rightarrow \infty $ as $t \to \infty$.  Then, $\tilde{\sR}_{M, \tt UCB_4}(T) = O( C(\epsilon_{T}) \kappa_{T} \log^{2} T)$. We can choose $\{\epsilon_{t}\}$ and $\{\kappa_{t}\}$ as two arbitrarily slowly increasing sequences and thus the regret can be made arbitrarily close to $\log^{2}(T)$.  
\end{theorem}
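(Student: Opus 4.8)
The plan is to recycle, essentially verbatim, the two refinements already in hand — the finite-precision modification from the proof of Theorem~\ref{thm:ucb4_cmin}(i) and the Markov-chain concentration bound from the proof of Theorem~\ref{thm:ucb4_regret_M} — and to check only that the two are compatible. For part (i), fix $0<\epsilon<\Delta_{min}$ and $\kappa>168|\mathcal{X}|_{max}^{2}/\rho_{min}$. First I would repeat the chain of inequalities leading to (\ref{eq:singleusr_njbd_st1}) that bounds the count $n_j(T)$ of a suboptimal arm $j$, observing — exactly as in Theorem~\ref{thm:ucb4_cmin}(i) — that with $\epsilon$-precise indices arm $j$ can be selected at step $m+2^p$ only when $\overline{X}_{j}(m+2^{p})+c_{m+2^{p}, s_{j}}+\epsilon \geq \overline{X}_{1}(m+2^{p})+c_{m+2^{p}, s_{1}}$, which forces one of the four events $A,B,C,D$ of (\ref{events_singleuser_finres}). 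The event $B$ carrying the extra $+\epsilon$ slack is a subset of the corresponding event without it, so $\bbP(A)$ and $\bbP(B)$ are controlled by the Markov concentration Lemma~\ref{lemma_Le} through the \emph{same} polynomial bounds (\ref{eq:event1prob_m})--(\ref{eq:event2prob_m}) derived in Theorem~\ref{thm:ucb4_regret_M}. Taking $l=\lceil 4\kappa\log T/(\Delta_j-\epsilon)^2\rceil$ makes event $C$ impossible (since $\mu_1-\mu_j-\epsilon-2\sqrt{\kappa\log(m+2^p)/s_j}\geq(\Delta_j-\epsilon)-(\Delta_j-\epsilon)=0$ for $s_j\geq l$), while $\epsilon<\Delta_{min}$ makes $D$ impossible. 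The remaining double sum converges to a value below $1$ precisely because $\kappa>168|\mathcal{X}|_{max}^{2}/\rho_{min}$, exactly as in (\ref{eq:njbd_M})--(\ref{eq:ucb4:nj:M}), so $\bbE[n_j(T)]\leq 4\kappa\log T/(\Delta_j-\epsilon)^2+(2D+1)$. The recomputation bound $\bbE[m(T)]\leq(1+\log T)\sum_{j>1}\bbE[n_j(T)]$ carries over unchanged from (\ref{eq:ucb4:m:M}), and substituting both into the regret bound of Lemma~\ref{lemma_AnVa2} with per-computation cost $C(\epsilon)$ gives the stated inequality.

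For part (ii), when $\Delta_{min},|\mathcal{X}|_{max},\rho_{min}$ are unknown I would take $\{\epsilon_t\}$ decreasing to $0$ and $\{\kappa_t\}$ increasing to $\infty$ with $\kappa_t\leq t$, set $\epsilon_{min}=\Delta_{min}/2$, and let $t_0$ be the smallest $t$ with $\epsilon_t<\epsilon_{min}$. Up to time $t_0$ the regret is at most linear and hence bounded by a ($T$-independent) constant of order $(\Delta_{max}+C(\epsilon_{min}))t_0$. For $t>t_0$ I would rerun the part-(i) argument with $\epsilon_{min}$ in place of $\epsilon$ and $\kappa_t$ in place of $\kappa$: for every step with $m+2^p>t_0$ we have $\epsilon_{m+2^p}<\epsilon_{min}$, so the selection event there is contained in the one with slack $\epsilon_{min}$, the single choice $l=\lceil 4\kappa_T\log T/(\Delta_j-\epsilon_{min})^2\rceil$ kills event $C$ uniformly over all such time indices $\leq T$, event $D$ is dead because $\epsilon_{min}<\Delta_{min}$, and the $A,B$ probabilities acquire the factor $(m+2^p)^{-(\kappa_{m+2^p}\rho_{min}-56|\mathcal{X}|_{max}^{2})/28|\mathcal{X}|_{max}^{2}}$ as in (\ref{eq:njbd_M}). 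Because $\kappa_t\to\infty$ this exponent eventually drops below $-4$, so only finitely many terms of the double sum fail to be summable and $\bbE[n_j(T)]=O(\kappa_T\log T)$; hence $\bbE[m(T)]=O(\kappa_T\log^2 T)$. Since no computation done by time $T$ costs more than $C(\epsilon_T)$, the cost term is $O(C(\epsilon_T)\kappa_T\log^2 T)$, which dominates both the $O(\kappa_T\log T)$ reward regret and the constant from $[1,t_0]$, giving $\tilde{\sR}_{M, \tt UCB_4}(T)=O(C(\epsilon_T)\kappa_T\log^2 T)$; letting $\{\epsilon_t\}$ decay and $\{\kappa_t\}$ grow arbitrarily slowly makes this arbitrarily close to $\log^2 T$.

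The bulk of the proof is therefore a verbatim reuse of two earlier arguments, and I expect the only real (and still minor) obstacle to be the joint bookkeeping: confirming that the constants produced by the two kinds of ``initial'' terms — the $t\leq t_0$ warm-up phase in part (ii), and the first few terms of the $m,p$ double sum before $\kappa_{m+2^p}$ has climbed past the convergence threshold — are genuinely $O(1)$ with no hidden dependence on $T$, and that one fixed value of $l$ indeed works uniformly over all the time indices that appear inside the sums.
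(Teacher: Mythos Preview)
Your proposal is correct and follows exactly the approach the paper indicates: the paper's proof is in fact omitted with the remark that it ``follows by a combination of the proof of the theorems \ref{thm:ucb4_cmin} and \ref{thm:ucb4_regret_M},'' and your plan carries out precisely that combination, superposing the $\epsilon$-precision event decomposition (\ref{events_singleuser_finres}) on the Markov concentration bounds (\ref{eq:event1prob_m})--(\ref{eq:event2prob_m}). Your bookkeeping concerns at the end are legitimate but minor, and your argument is already more detailed than anything the paper provides for this result.
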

The proof follows by a combination of the proof of the theorems \ref{thm:ucb4_cmin} and \ref{thm:ucb4_regret_M}, and is omitted.


\section{The Decentralized MAB problem with i.i.d. rewards}
\label{sec:dmab}

We now consider the decentralized multi-armed bandit problem with i.i.d. rewards wherein multiple players play at the same time.  
Players have no information about means or distribution of rewards from various arms. There are no \textit{dedicated control channels} for coordination or communication between the players. If two or more players pick the same arm, we assume that neither gets any reward. Tshis is an online learning problem of distributed bipartite matching. 

Distributed algorithms for bipartite matching algorithms are known \cite{Be88,ZaSpPa08} which determine an $\epsilon$-optimal matching with a `minimum' amount of information exchange and computation. However, every run of this distributed bipartite matching algorithm incurs a cost due to computation, and communication necessary  to exchange  some information for decentralization. Let $C$ be the cost per run, and $m(t)$ denote the number of times the distributed bipartite matching algorithm is run by time t. Then, under policy $\alpha$ the expected regret  is
\begin{equation}
\label{eq:dregret}  
\sR_{\alpha}(T)= T\sum_{i=1}^{M} \mu_{i,k_i^{**}}-\bbE_{\alpha}\left[\sum_{t=1}^{T} \sum_{i=1}^{M} X_{i,\alpha_i(t)}(t)\right] + C \bbE[m(T)].
\end{equation}
where $\mathbf{k}^{**}$ is the optimal matching as defined in equation (\ref{eq:optmatching}) in section \ref{subsec:model:iid}.\\
\noindent\textbf{Temporal Structure.} We divide time into frames. Each frame is one of two kinds: a \textit{decision frame}, and an \textit{exploitation frame}. In the decision frame, the index is recomputed, and the distributed bipartite matching algorithm run again to determine the new matching. The length of such a frame can be seen as cost of the algorithm. We further divide the decision frame into two phases, a \textit{negotiation phase} and an \textit{interrupt phase} (see Figure \ref{fig:negphasephy}). The information exchange needed to compute an $\e$-optimal matching is done in the \textit{negotiation phase}. In the \textit{interrupt phase}, a player signals to other players if his allocation has changed. In the exploitation frame, the current matching is \textit{exploited} without updating the indices. Later, we will allow the frame lengths  to increase with time.

We now present the ${\tt dUCB_4}$ algorithm, a decentralized version of ${\tt UCB_4}$. For each player $i$ and each arm $j$, we define a ${\tt dUCB_4}$ index at the end of frame $t$ as
\begin{equation}
\label{eq:index}
g_{i,j}(t) := \overline{X}_{i,j}(t)+\sqrt{\frac{(M+2)\log n_i(t)} {n_{i,j}(t)}},
\end{equation}
where $n_i(t)$ is the number of successful plays (without collisions) of player $i$ by frame $t$, $n_{i,j}(t)$ is the number of times player $i$ picks  arm $j$ successfully by frame $t$. $\overline{X}_{i,j}(t)$ is the sample mean of rewards from arm $j$ for player $i$ from $n_{i,j}(t)$ samples.  Let $g(t)$ denote the vector $(g_{i,j}(t), 1 \leq i \leq M, 1 \leq j \leq N)$. Note that $g$ is computed only in the decision frames using the information available upto that time.  Each player now uses the ${\tt dUCB_4}$ algorithm. We will refer to an $\epsilon$-optimal distributed bipartite matching algorithm as ${\tt dBM_{\e}}(g(t))$ that yields a solution $\mathbf{k}^*(t):=(k_1^*(t),\ldots, k_M^*(t)) \in \sP(N) $ such that $\sum_{i=1}^{M} g_{i, k^{*}_{i}(t)}(t) \geq  \sum_{i=1}^{M}   g_{i, k_{i}}(t) ) - \epsilon, ~ \forall \mathbf{k} \in \sP(N)  , \mathbf{k} \neq \mathbf{k}^*$. Let $\mathbf{k}^{**} \in \sP(N)$ be such that $\mathbf{k}^{**} \in \arg \max_{\mathbf{k} \in \sP(N) } \sum_{i=1}^{M} \mu_{i,\mathbf{k}_{i}},$ i.e., an optimal bipartite matching with expected rewards from each matching.
Denote $\mu^{**} := \sum_{i=1}^{M} \mu_{i, \mathbf{k}_i^{**}}$, and define $\Delta_{\mathbf{k}} :=  \mu^{**} - \sum_{i=1}^{M} \mu_{i,\mathbf{k}_{i} },~ \mathbf{k} \in \sP(N)$.
Let $\Delta_{min}  = \min_{\mathbf{k} \in \sP(N), \mathbf{k} \neq \mathbf{k}^{**}} \Delta_{\mathbf{k}}$ and $\Delta_{max}  = \max_{\mathbf{k} \in \sP(N)} \Delta_{\mathbf{k}}$. 
We assume that $\Delta_{min}>0$. 


\begin{algorithm}
\caption{$\tt dUCB_4$ for User $i$}
\label{algo:dUCB_4}
{\small
\begin{algorithmic}[1]
\STATE {\bf Initialization:} Play a set of matchings so that each player plays each arm at least once. Set counter $\eta = 1$.
\WHILE {($t \leq T$)}
\IF {($\eta = 2^{p}~ \text{for some}~ p = 0, 1, 2, \cdots$)}
\STATE //\textit{Decision frame:}
\STATE Update $g(t)$;
\STATE Participate in the ${\tt dBM_{\e}}(g(t))$  algorithm to obtain a match $k_i^*(t)$;
\IF{$(k_i^*(t) \neq k_i^*(t-1))$}
\STATE Use \textit{interrupt phase} to signal an INTERRUPT to all other players about changed allocation;
\STATE Reset $\eta=1$;
\ENDIF
\IF{(Received an INTERRUPT)}
\STATE Reset $\eta=1$;
\ENDIF
\ELSE 
\STATE // \textit{Exploitation frame:}
\STATE $k_i^*(t) = k_i^*(t-1)$;
\ENDIF
\STATE Play arm $k_i^*(t)$;
\STATE Increment counter $\eta=\eta+1$, $t=t+1$;
\ENDWHILE
\end{algorithmic}
}
\end{algorithm} 

In the $\tt dUCB_4$ algorithm, at the end of every decision frame, the ${\tt dBM_{\e}}(g(t))$ will give a legitimate matching with no two players colliding on any arm. Thus, the regret accrues either if the matching $\mathbf{k}(t)$ is not the optimal matching $\mathbf{k}^{**}$, or if a decision frame is employed by the players to recompute the matching. Every time a frame is a decision frame, it adds a cost $C$ to the regret. The cost $C$ depends on two parameters: (a) the precision of the bipartite matching algorithm $\epsilon_{1}>0$, and (b) the precision of the index representation $\epsilon_{2}>0$. A bipartite matching algorithm has an $\epsilon_{1}$-precision if it gives an $\epsilon_{1}$-optimal matching. This would happen, for example, when such an algorithm is run only for a finite number of rounds. The index  has an $\epsilon_{2}$-precision if any two indices are not distinguishable if they are closer than $\epsilon_{2}$. This can happen for example when indices must be communicated to other players in a finite number of bits.

Thus, the cost $C$ is a function of $\epsilon_{1}$ and $\epsilon_{2}$, and can be denoted as $C(\epsilon_{1}, \epsilon_{2})$, with $C(\epsilon_{1}, \epsilon_{2}) \rightarrow \infty$ as $\epsilon_{1}$ or  $\epsilon_{2} \rightarrow 0$. Since, $\epsilon_{1}$ and $\epsilon_{2}$ are the parameters that are fixed \textit{a priori}, we consider $\epsilon=\min(  \epsilon_{1},\epsilon_{2})$ to specify  both precisions. We denote the cost as $C(\epsilon)$. 

We first show that if $\Delta_{min}$ is known, we can choose an $\epsilon < \Delta_{min}/(M+1)$, so that ${\tt dUCB_4}$ algorithm will achieve order log-squared regret growth with $T$. If $\Delta_{min}$ is not known, we can pick a positive monotone sequence $\{\epsilon_{t}\}$  such that $\e_t \to 0$, as $t \to \infty$. In a decentralized bipartite matching algorithm, the precision $\e$ will depend on the amount of information exchanged in the decision frames. It, thus, is some monotonically decreasing function $\e=f(L)$ of their length $L$ such that $\e \to 0$ as $L \to \infty$. Thus, we must pick a positive monotone sequence $\{L_t\}$ such that $L_t \to \infty$. Clearly, $C(f(L_{t})) \to \infty$ as $t \to \infty$.  This can happen arbitrarily slowly. 

\begin{theorem}\label{thm:ducb4}
(i) Let $\epsilon > 0$ be the precision of the bipartite matching algorithm and the precision of the index representation. If $\Delta_{min}$ is known, choose $\epsilon > 0$ such that $\epsilon < \D_{min}/(M+1)$. Let $L$ be the length of a frame. Then, the expected regret of the ${\tt dUCB_4}$ algorithm is 
\begin{eqnarray*}
 \tilde{\sR}_{\tt dUCB_4}(T) &\leq & (L\Delta_{max} + C(f(L))   (1+\log T) ) \cdot \left( \frac{4 M^{3} (M+2) N \log T }{  (\Delta_{min} - ( (M+1) \epsilon)^{2} }   + NM (2M+1)  \right).
\end{eqnarray*}
Thus, \(\tilde{\sR}_{\tt dUCB_4}(T)= O(\log^{2} T).\)\\
(ii) When $\Delta_{min}$ is  unknown, denote $\epsilon_{min} = \Delta_{min}/(2(M+1))$ and let $L_{t} \rightarrow \infty$ as $t \rightarrow \infty$. Then, there exists a $t_0>0$ such that for all $T > t_0$,
\begin{eqnarray*}
 \tilde{\sR}_{\tt dUCB_4}(T) && \leq  (L_{t_{0}}\Delta_{max}+C(f(L_{t_{0}})) t_{0}  + (L_{T}\Delta_{max} + C(f(L_{T})) (1+\log T) ) \cdot \\
 &&  \hspace{2cm} \left( \frac{4 M^{3} (M+2) N \log T }{  (\Delta_{min} - \epsilon_{min})^{2} }   + NM (2M+1)  \right),
\end{eqnarray*}
where $t_{0}$ is the smallest $t$ such that $f(L_{t_{0}}) < \epsilon_{min}$. Thus by choosing an arbitrarily slowly increasing sequence $\{L_{t}\}$ we can make the regret arbitrarily close to $\log^{2} T$.
%
\end{theorem}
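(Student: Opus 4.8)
The plan is to run the proof of Theorem~\ref{thm:ucb4_regret} with each legitimate matching $\mathbf{k}\in\sP(N)$ playing the role of an ``arm'' and the index sum $\sum_{i}g_{i,k_{i}}(t)$ playing the role of a single UCB index, while tracking the extra $\epsilon$-slack coming from the decentralized matching step. Consider part~(i) first. Since ${\tt dBM_{\epsilon}}(g(t))$ always returns a collision-free matching, reward is lost only in frames in which a suboptimal matching is played, so if $Q(T)$ denotes the number of such frames and $m(T)$ the number of decision frames, then $\tilde{\sR}_{\tt dUCB_4}(T)\le L\Delta_{max}\,\bbE[Q(T)]+C(f(L))\,\bbE[m(T)]$ up to the $O(N)$ initialization frames (absorbed into the lower-order term). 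Within each stable period a decision frame is re-confirmed at the epochs $\eta=1,2,4,\dots$, exactly as in ${\tt UCB_4}$, so splitting $m(T)=m_{1}(T)+m_{2}(T)$ into re-confirmations of the optimal versus a suboptimal matching gives $m_{2}(T)\le Q(T)$ and $m_{1}(T)\le Q(T)\log T$, hence $\bbE[m(T)]\le(1+\log T)\,\bbE[Q(T)]$ and $\tilde{\sR}_{\tt dUCB_4}(T)\le(L\Delta_{max}+C(f(L))(1+\log T))\,\bbE[Q(T)]$. Everything thus reduces to bounding $\bbE[Q(T)]$.

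To bound $\bbE[Q(T)]$, fix a suboptimal matching $\mathbf{k}$ selected at a decision frame $t$. By the $\epsilon$-optimality of ${\tt dBM_{\epsilon}}$ together with the finite-precision index representation, the selected matching obeys $\sum_{i}g_{i,k_{i}}(t)+(M+1)\epsilon\ge\sum_{i}g_{i,k_{i}^{**}}(t)$; writing $c_{i,j}(t)=\sqrt{(M+2)\log n_{i}(t)/n_{i,j}(t)}$ and expanding the ${\tt dUCB_4}$ indices, this forces at least one of $A:\ \sum_{i}\overline{X}_{i,k_{i}^{**}}(t)\le\mu^{**}-\sum_{i}c_{i,k_{i}^{**}}(t)$, $B:\ \sum_{i}\overline{X}_{i,k_{i}}(t)\ge\sum_{i}\mu_{i,k_{i}}+\sum_{i}c_{i,k_{i}}(t)$, or $C:\ \mu^{**}<\sum_{i}\mu_{i,k_{i}}+2\sum_{i}c_{i,k_{i}}(t)+(M+1)\epsilon$. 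Since $\sum_{i}c_{i,k_{i}}(t)\le M\max_{i}c_{i,k_{i}}(t)\le M\sqrt{(M+2)\log T/\min_{i}n_{i,k_{i}}(t)}$, event $C$ can hold only when $\min_{i}n_{i,k_{i}}(t)<4M^{2}(M+2)\log T/(\Delta_{\mathbf{k}}-(M+1)\epsilon)^{2}\le l$, where $l:=\lceil 4M^{2}(M+2)\log T/(\Delta_{min}-(M+1)\epsilon)^{2}\rceil$ (using $\Delta_{\mathbf{k}}\ge\Delta_{min}$), which is finite precisely because $\epsilon<\Delta_{min}/(M+1)$. Now split $Q(T)=Q_{1}+Q_{2}$: a suboptimal frame goes into $Q_{1}$ if $\min_{i}n_{i,k_{i}}(t)<l$, into $Q_{2}$ otherwise. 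Each $Q_{1}$-frame makes some deficient counter $n_{i,j}$ (a ``witness'', still below $l$) advance, and a fixed counter can witness at most $l$ frames before reaching $l$, so over the $NM$ player--arm pairs $Q_{1}\le NM\,l\le 4M^{3}(M+2)N\log T/(\Delta_{min}-(M+1)\epsilon)^{2}+NM$. Each $Q_{2}$-frame lies in an exploitation block following the selection of a suboptimal matching with all its component counters $\ge l$, which by the dichotomy above can happen only when $A$ or $B$ occurs; a union bound over the $M$ players reduces $A,B$ to the single-player tail events $\{\overline{X}_{i,j}(t)\ge\mu_{i,j}+c_{i,j}(t)\}$ and $\{\overline{X}_{i,j}(t)\le\mu_{i,j}-c_{i,j}(t)\}$, each controlled by the Chernoff--Hoeffding inequality (Fact~1) as in Theorem~\ref{thm:ucb4_regret} (the constant $M+2$ is picked so that the resulting exponent keeps the $\sum_{m}\sum_{p}2^{p}(m+2^{p})^{-4}$-type double sum convergent), giving $\bbE[Q_{2}]=O(NM^{2})$, absorbed into the $NM(2M+1)$ term. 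Collecting these yields $\bbE[Q(T)]\le 4M^{3}(M+2)N\log T/(\Delta_{min}-(M+1)\epsilon)^{2}+NM(2M+1)$, which is part~(i).

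For part~(ii) I follow the template of Theorem~\ref{thm:ucb4_cmin}(ii): $L_{t}\uparrow\infty$ forces the achievable precision $f(L_{t})\downarrow 0$, so there is a smallest $t_{0}$ with $f(L_{t_{0}})<\epsilon_{min}=\Delta_{min}/(2(M+1))$; before $t_{0}$ the precision may be too coarse to ever rule out a suboptimal matching, so that part of the regret is bounded crudely by the per-frame cost $(L_{t_{0}}\Delta_{max}+C(f(L_{t_{0}})))\,t_{0}$, whereas for $t>t_{0}$ we have $\Delta_{min}-(M+1)f(L_{t})>\Delta_{min}/2>0$, so the argument of part~(i) applies with $\epsilon=\epsilon_{min}$ after replacing the now time-varying frame length and per-frame cost by their largest values $L_{T}$ and $C(f(L_{T}))$ on $[t_{0},T]$ (monotonicity of $L_{t}$ and of $C$); adding the two contributions gives the stated bound, and since $\{L_{t}\}$ can be taken to increase arbitrarily slowly, $C(f(L_{T}))$, hence the whole bound, becomes arbitrarily close to $O(\log^{2}T)$. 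The step I expect to be the real obstacle is the one genuinely new to the multi-player setting: recasting the single-arm bookkeeping of Theorem~\ref{thm:ucb4_regret} into the aggregate form $Q_{1}\le NM\,l$. One has to notice that, among the three events $A,B,C$, only $C$ can force a suboptimal selection once all the relevant counters are large, and that $C$ in turn implicates some \emph{individual} deficient counter, so the ``a matching is a meta-arm'' picture must be unfolded into a per-(player, arm) count; a secondary technical point is that the index uses $n_{i}(t)$ rather than $t$, so one must verify---using collision-freeness, which keeps $n_{i}(t)$ within a sublinear additive gap of the elapsed time---that the Chernoff--Hoeffding tail estimates are no weaker than in the single-player proof.
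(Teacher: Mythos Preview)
Your proposal is correct and follows essentially the same route as the paper: the paper formalizes your witness argument by introducing auxiliary counters $\tilde{n}_{i,j}(t)$ (incremented at the least-played pair $(i,j)$ in the current suboptimal matching) and bounding each $\bbE[\tilde{n}_{i,j}(T)]$ separately via the same $A_i/B_i/C/D$ dichotomy and Chernoff--Hoeffding tails, then summing over the $NM$ pairs---which is exactly your $Q_1\le NM\,l$ step plus the Chernoff-controlled $Q_2$ term. Your decomposition $m(T)=m_1+m_2$ and the treatment of part~(ii) also match the paper's; the only cosmetic difference is that the paper lists the individual events $A_i,B_i$ directly rather than first writing the aggregate events $A,B$ and then union-bounding.
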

\begin{proof}
(i) First, we obtain a bound for $L=1$. Then, appeal to a result like Theorem \ref{thm:ucb1} to obtain the result for general $L$. The implicit dependence between $\epsilon$ and $L$ through the function $f(\cdot)$ does not affect this part of the analysis. Details are omitted due to space limitations. 

We first upper bound the number of sub-optimal plays. We define $\tilde{n}_{i,j}(t), 1 \leq i \leq M, 1 \leq j \leq N$ as follows: Whenever the ${\tt dBM_{\e}}(g(t))$ algorithm gives a non-optimal matching $\mathbf{k}(t)$,  $\tilde{n}_{i,j}(t)$ is increased by one for some $(i,j) \in \arg \min_{1 \leq i \leq M, 1 \leq j \leq N  } n_{i,j}(t)$. Let $\tilde{n}(T)$ denote the total number of suboptimal plays. Then, clearly, $\tilde{n}(T) = \sum_{i=1}^{M} \sum_{j=1}^{N} \tilde{n}_{i,j}(T)$. So, in order to get a bound on $\tilde{n}(T)$ we first get a bound on $\tilde{n}_{i,j}(T)$. 

Let $\tilde{I}_{i,j}(t)$ be the indicator function which is equal to $1$ if $\tilde{n}_{i,j}(t)$ is incremented by one, at time $t$. When  $\tilde{I}_{i,j}(t)=1$, there will be a corresponding  matching $\mathbf{k}(t) \neq \mathbf{k}^{**}$ such that $k_{i}(t)=j$. In the following, we denote it as $\mathbf{k}$, omitting the time index. A non-optimal matching $\mathbf{k}$ is selected if the event $\bigg\{  \sum_{i=1}^{M}  g_{i,k^{**}_{i} }(m+2^{p}-1) \leq (M+1) \e + \sum_{i=1}^{M}  g_{i,k_{i}}(m+2^{p}-1)\bigg\}$ happens. If each index has an error of at most $\epsilon$, the sum of $M$ terms may introduce an error of atmost $M\epsilon$. In addition, the distributed bipartite matching algorithm ${\tt dBM_{\e}}$ itself yields only an $\epsilon$-optimal matching. This accounts for the term $(M+1)\e$ above. Since the initial steps are similar to that in Theorem \ref{thm:ucb4_regret}, we skip those steps. Thus, similar to the equation (\ref{eq:singleusr_njbd_st1}), we get $\tilde{n}_{i,j}(T) \leq$
\begin{eqnarray}
\label{eq:njbd_st1}
 l &+& \sum_{m=1}^{T} \sum_{p=0}^{\infty } 2^{p} I \bigg\{  \sum_{i=1}^{M}  g_{i,k^{**}_{i} }(m+2^{p}-1) \leq (M+1) \e + \sum_{i=1}^{M}  g_{i,k_{i}}(m+2^{p}-1),  \tilde{n}_{i,j}(m-1) \geq l \bigg\} \nonumber \\
&\leq & l + \sum_{m=1}^{T} \sum_{p=0}^{\infty } 2^{p} I \bigg \{  \sum_{i=1}^{M}  \bigg( \overline{X}_{i,k^{**}_{i}}(m+2^{p}-1)   + c_{m+2^{p}-1, n_{i,k^{**}_{i}}(m+2^{p}-1)} \bigg)   \nonumber \\
  ~~&& \hspace{2cm}  \leq  (M+1)\e + \sum_{i=1}^{M}  \overline{X}_{i,k_{i}}(m+2^{p}-1) +  c_{m+2^{p}-1, n_{i,k_{i}}(m+2^{p}-1)} , \tilde{n}_{i,j}(m-1) \geq l \bigg \}  \nonumber \\
&\leq & l + \sum_{m=1}^{T} \sum_{p=0}^{\infty } 2^{p} I \bigg\{ \min_{1 \leq s_{1,k^{**}_{1}}, \ldots, s_{M,k^{**}_{M}} < m+2^{p}} \sum_{i=1}^{M} \left( \overline{X}_{i,k^{**}_{i}}(m+2^{p}-1)+c_{m+2^{p}-1, s_{i,k^{**}_{i}}} \right) \nonumber \\
&& \hspace{2cm} \leq (M+1)\e + \max_{l \leq s_{1,k_{1}}^{'}, \ldots, s_{M,k_{M}}^{'} < m+2^{p}} \sum_{i=1}^{M}  \left( \overline{X}_{i,k_{i}}(m+2^{p}-1)+c_{m+2^{p}-1, s_{i,k_{i}}^{'}} \right) \bigg \} \nonumber \\
&& \leq  l + \sum_{m=1}^{\infty} \sum_{p=0}^{\infty } 2^{p} \sum_{s_{1,k^{**}_{1}}=1}^{m+2^{p}} \ldots \sum_{s_{M,k^{**}_{M}}=1}^{m+2^{p}} \sum_{s^{'}_{1,k_{1}}=1}^{m+2^{p}} \ldots \sum_{s^{'}_{M,k_{M}}=1}^{m+2^{p}} I\bigg \{   \sum_{i=1}^{M} \left( \overline{X}_{i,k^{**}_{i}}(m+2^{p})+c_{m+2^{p}, s_{i,k^{**}_{i}}} \right) \nonumber \\
&& \hspace{4cm} \leq (M+1)\e + \sum_{i=1}^{M} \left( \overline{X}_{i,k_{i}}(m+2^{p})+c_{m+2^{p}, s_{i,k_{i}}^{'}} \right) \bigg \}.
\end{eqnarray}
Now, it is easy to observe that the event
\begin{eqnarray*}
 \bigg \{  \sum_{i=1}^{M} \left( \overline{X}_{i,k^{**}_{i}}(m+2^{p})+c_{m+2^{p}, s_{i,k^{**}_{i}}} \right) \nonumber \leq (M+1)\e +  \sum_{i=1}^{M} \left( \overline{X}_{i,k_{i}}(m+2^{p})+c_{m+2^{p}, s_{i,k_{i}}^{'}} \right) \bigg \}
\end{eqnarray*}
implies at least one of the following events: 
\begin{eqnarray}
\label{events_multiuser} 
A_{i} &:=& \bigg \{\overline{X}_{i,k^{**}_{i}}(m+2^{p}) \leq  \mu_{i,k^{**}_{i}} - c_{m+2^{p}, s_{i,k^{**}_{i}}} \bigg \},  \nonumber \\
B_{i} &:=& \bigg \{ \overline{X}_{i,k_{i}}(m+2^{p}) \geq \mu_{i,k_{i}} +  c_{m+2^{p}, s_{i,k_{i}}^{'}} \bigg \},  1 \leq i \leq M,  \nonumber \\
C &:= &\bigg \{ \sum_{i=1}^{M} \mu_{i,k^{**}_{i}}  < (M+1) \e + \sum_{i=1}^{M} \mu_{i,k_{i}}  + 2 \sum_{i=1}^{M} c_{m+2^{p}, s_{i,k_{i}}^{'}} \bigg \} \\
D &:=&\bigg \{ (M+1)\e  >    \sum_{i=1}^{M} \mu_{i,k^{**}_{i}}  - \sum_{i=1}^{M} \mu_{i,k_{i}} \bigg \}. \nonumber
\end{eqnarray}
Using the Chernoff-Hoeffding inequality, we get 
\(
\bbP(A_{i}) \leq  (m+2^{p})^{-2(M+2)}, ~~\bbP(B_{i}) \leq  (m+2^{p})^{-2(M+2)}, ~ 1 \leq i \leq M. 
\)
For $ l \geq \left \lceil \frac{4 M^{2} (M+2) \log T }{  (\Delta_{min} -(M+1)\e )^{2} } \right \rceil $, we get \\ $ \sum_{i=1}^{M} \mu_{i,k^{**}_{i}}  - \sum_{i=1}^{M} \mu_{i,k_{i}} - (M+1)\epsilon - 2 \sum_{i=1}^{M} c_{m+2^{p}, s_{i,k_{i}}^{'}} $
\begin{eqnarray}
\label{events_CD}
&\geq & \sum_{i=1}^{M} \mu_{i,k^{**}_{i}}  - \sum_{i=1}^{M} \mu_{i,k_{i}} - (M+1)\epsilon  - 2 M \sqrt{\frac{(M+2) \log (m+2^{p}) }{l}  } \nonumber  \\
& \geq & \sum_{i=1}^{M} \mu_{i,k^{**}_{i}}  - \sum_{i=1}^{M} \mu_{i,k_{i}} - (M+1)\epsilon  - (\Delta_{min} - (M+1)\epsilon) \geq 0
\end{eqnarray}
The event $D$ is false by assumption. So, we get, $\bbE[\tilde{n}_{i,j}(T)]$ 
\begin{eqnarray}
\label{eq:ntilde}
&& \leq \left \lceil \frac{4 M^{2} (M+2) \log T }{  (\Delta_{min} -(M+1)\e )^{2} } \right \rceil + \sum_{m=1}^{\infty} \sum_{p=0}^{\infty } 2^{p} \sum_{s_{1,k^{**}_{1}}=1}^{m+2^{p}} \ldots \sum_{s_{M,k^{**}_{M}}=1}^{m+2^{p}} \sum_{s^{'}_{1,k_{1}}=1}^{m+2^{p}} \ldots \sum_{s^{'}_{M,k_{M}}=1}^{m+2^{p}} 2M (m+2^{p})^{-2(M+2)} \nonumber \\
&& \leq  \left \lceil \frac{4 M^{2} (M+2) \log T }{  (\Delta_{min} - (M+1)\e)^{2} } \right \rceil   + 2 M \sum_{m=1}^{\infty}  \sum_{p=0}^{\infty } 2^{p} (m+2^{p})^{-4} \nonumber \\
&& \leq  \frac{4 M^{2} (M+2)  \log T }{  (\Delta_{min} - (M+1)\e)^{2} }   + (2M+1).
\end{eqnarray}
Now, putting it all together, we get
\begin{eqnarray}
\label{dUCB_subopt}
\bbE[\tilde{n}(T)] &=& \sum_{i=1}^{M} \sum_{j=1}^{N}  \bbE[\tilde{n}_{i,j}(T)] \leq  \frac{4 M^{3} (M+2) N  \log T }{  (\Delta_{min} - (M+1)\e)^{2} }   + (2M+1) MN. \nonumber
\end{eqnarray}
Now, by the proof of Theorem \ref{thm:ucb4_regret} (c.f. equation(\ref{eq:ucb4:m}), $\bbE [m(T)]  \leq \bbE[\tilde{n}(T)] (1+ \log T).$ 
We can now bound the regret,
$\tilde{\sR}_{\tt dUCB_4}(T) = \sum_{k \in \sP(N), k \neq k^{**}} \Delta_{k} \sum_{i=1}^{M}  \bbE[\tilde{n}_{i,k_{i}}(T)] + C \bbE[m(T)] $
\begin{eqnarray*}
& \leq &  \Delta_{max} \sum_{k \in \sP(N), k \neq k^{**}} \sum_{i=1}^{M}  \bbE[\tilde{n}_{i,k_{i}}(T)] + C \bbE[m(T)]\\
& =& \Delta_{max} \bbE[\tilde{n}(T)] + C \bbE[m(t)].
\end{eqnarray*}
For a general $L$, by Theorem \ref{thm:ucb1} we get 
\begin{eqnarray*}
\tilde{\sR}_{\tt dUCB_4}(T)  \leq  L \Delta_{max} \bbE[\tilde{n}(T)] + C(f(L)) \bbE[m(T)]  \leq  (L \Delta_{max} + C(f(L))  (1+ \log T)) \bbE[\tilde{n}(T)].
\end{eqnarray*}
 Now, using the bound (\ref{dUCB_subopt}), we get the desired upper bound on the expected regret. 
\\
(ii) Since $\e_t=f(L_t)$ is a monotonically decreasing function of $L_t$ such that $\e_t \to 0$ as $L_t \to \infty$, there exists a $t_{0}$ such that  for $t > t_{0}$, $\e_t < \e_{min}$.  We may get a linear regret upto time $t_0$ but after that by the analysis of Theorem \ref{thm:ucb4_regret},  regret grows only sub-linearly. Since $C(\cdot)$ is monotonically increasing, $C(f(L_T)) \geq C(f(L_t)), \forall t \leq T$,  we get the desired result. The last part is illustrative and can be trivially established using the obtained bound on the regret in (ii).
\end{proof}

\noindent \textbf{Remarks.} 1. We note that in the initial steps, our proof followed the proof of the main result in \cite{GaKrJa12}. \\
2. The ${\tt UCB_2}$ algorithm described in \cite{AuCeFi02} performs computations only at exponentially spaced time epochs. So, it is natural to imagine that a decentralized algorithm based on it could be developed, and get a better regret bound. Unfortunately, the single player ${\tt UCB_2}$ algorithm has an obvious weakness: regret is linear in the number of arms. Thus, the decentralized/combinatorial extension of ${\tt UCB_2}$ would yield regret growing exponentially in the number of players and arms. We use a similar index but a different scheme, allowing us to achieve \textit{poly-log} regret growth and a linear memory requirement for each player.

\section{The Decentralized MAB problem with Markovian rewards}
\label{sec:dmab:M}
Now, we consider the decentralized MAB problem with $M$ players and $N$ arms where the rewards obtained each time when an arm is pulled are not i.i.d. but come from a Markov chain. The reward that player $i$ gets from arm $j$ (when there is no collision) $X_{ij}$, is modelled as an irreducible, aperiodic, reversible Markov chain on a finite state space $\mathcal{X}^{i,j}$ and represented by a transition probability matrix $P^{i,j}:=\left(p^{i,j}_{x,x^{'}} : x, x^{'} \in \mathcal{X}^{i,j}\right)$. Assume that $\mathcal{X}^{i,j} \in (0, 1]$. Let  $X_{i,j}(1), X_{i,j}(2), \ldots$ denote the successive rewards from arm $j$ for player $i$.  All arms are mutually independent for all players.  Let $\mathbf{\pi}^{i,j}:=\left(\pi^{i,j}_{x}, x \in \mathcal{X}^{i,j}\right)$ be the stationary distribution of the Markov chain $P^{i,j}$. The mean reward from arm $j$ for player $i$ is defined as $\mu_{i,j}:=\sum_{x \in \mathcal{X}  ^{i,j} } x \pi^{i,j}_{x} $. Note that the Markov chain represented by $P^{i,j}$ makes a state transition only when player $i$ plays arm $j$. Otherwise, it remains \emph{rested}.  As described in the previous section,  $n_i(t)$ is the number of successful plays (without collisions) of player $i$ by frame $t$, $n_{i,j}(t)$ is the number of times player $i$ picks  arm $j$ successfully by frame $t$ and $\overline{X}_{i,j}(t)$ is the sample mean of rewards from arm $j$ for player $i$ from $n_{i,j}(t)$ samples. Denote $\Delta_{min}:= \min_{\mathbf{k} \in \sP(N), \mathbf{k} \neq \mathbf{k}^{**}} \Delta_{\mathbf{k}}$ and $\Delta_{max}:= \max_{\mathbf{k} \in \sP(N)} \Delta_{\mathbf{k}}$. Denote $\pi_{min}:=\min_{1 \leq i \leq M, 1 \leq j \leq N, x \in \mathcal{X}^{i,j}} \pi^{i,j}_{x} $, $x_{max}:=\max_{1 \leq i \leq M, 1 \leq j \leq N, x \in \mathcal{X}^{i,j}} x$ and $x_{min}:=\min_{1 \leq i \leq M, 1 \leq j \leq N, x \in \mathcal{X}^{i,j}}x$. Let ${\hat{\pi}^{i,j}}_{x}:=\max\{\pi^{i,j}_{x}, 1-\pi^{i,j}_{x} \}$ and $\hat{\pi}_{max}:= \max_{1 \leq i \leq M, 1 \leq j \leq N, x \in \mathcal{X}^{i,j}} {\hat{\pi}^{i,j}}_{x} $. Let  $|\mathcal{X}^{i,j}|$ denote the cardinality of the state space $\mathcal{X}^{i,j}$, $|\mathcal{X}|_{max}:=\max_{1 \leq i \leq M, 1 \leq j \leq N } |\mathcal{X}^{i,j}|$. Let $\rho^{i,j}$ be the  eigenvalue gap, $1 - \lambda_{2}$, where $\lambda_{2}$ is the second largest eigenvalue of the matrix ${P^{i,j}}^{2}$. Denote $\rho_{max}:= \max_{1 \leq i \leq M, 1 \leq j \leq N } \rho^{i,j}$ and $\rho_{min}:=\min_{1 \leq i \leq M, 1 \leq j \leq N }\rho^{i,j}$.

The total reward obtained by time $T$ is $S_{T}  =  \sum_{j=1}^{N} \sum_{i=1}^{M} \sum_{s=1}^{n_{i,j}(T)} X_{i,j}(s)$ and the regret is
\begin{equation}
\label{eq:dregret:M}  
\tilde{\sR}_{M, \alpha}(T) := T\sum_{i=1}^{M} \mu_{i,k_i^{**}}-\bbE_{\alpha}\left[\sum_{j=1}^{N} \sum_{i=1}^{M} \sum_{s=1}^{n_{i,j}(T)} X_{i,j}(s)\right] + C \bbE[m(T)].
\end{equation}
Define the index
 \begin{equation}
\label{eq:dindex:M}
g_{i,j}(t) := \overline{X}_{i,j}(t)+\sqrt{\frac{\kappa  \log n_i(t)} {n_{i,j}(t)}} 
\end{equation} where $\kappa$ be any constant such that  $\kappa > (112+56M) |\mathcal{X}|_{max}^{2}/\rho_{min}$.

We need the following lemma to prove the regret bound. 
\begin{lemma}
If the reward of each player-arm pair $(i,j)$ is given by a Markov chain, satisfying the properties of Lemma \ref{lemma_AnVa1}, then  under any policy $\alpha$ 
\begin{equation}
\tilde{\sR}_{M, \tt \alpha }(T) \leq \sum_{k \in \sP(N), k \neq k^{**}} \Delta_{k}   \bbE[n^{k}(T)] + C \bbE[m(T)] + \tilde{K}_{\mathcal{X}, P}
\end{equation}
where $n^{k}(T)$ is the number of times that the matching $k$ occurred by the time $T$ and  $\tilde{K}_{\mathcal{X}, P}$ is defined as $\tilde{K}_{\mathcal{X}, P} = \sum_{j=1}^{N} \sum_{i=1}^{M} \sum_{x \in \mathcal{X}^{i,j}} x/\pi^{j}_{min} $
\end{lemma}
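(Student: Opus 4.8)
The plan is to adapt the proof of Lemma~\ref{lemma_AnVa2} to the decentralized setting, doing the Markov-chain concentration bookkeeping pair-by-pair and then reorganizing the per-arm counts into per-matching counts. First I would fix, for each player--arm pair $(i,j)$, the sequence of successive rewards $X_{i,j}(1), X_{i,j}(2),\ldots$, let $\mathcal{F}^{i,j}_{t}$ be the $\sigma$-algebra generated by $(X_{i,j}(1),\ldots,X_{i,j}(t))$, and set $\mathcal{F}^{i,j}=\vee_{t\ge 1}\mathcal{F}^{i,j}_{t}$ and $\mathcal{G}^{i,j}=\vee_{(i',j')\neq(i,j)}\mathcal{F}^{i',j'}$. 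Since all the chains are mutually independent, $\mathcal{G}^{i,j}$ is independent of $\mathcal{F}^{i,j}$, and $n_{i,j}(T)$ is a stopping time with respect to $\mathcal{G}^{i,j}\vee\mathcal{F}^{i,j}_{T}$: whether player $i$ has played arm $j$ a given number of times by frame $T$ depends only on the rewards already observed from pair $(i,j)$ together with everything observed from the other pairs. This is precisely the hypothesis under which Lemma~\ref{lemma_AnVa1} applies.

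Next I would write $S_{T}=\sum_{j=1}^{N}\sum_{i=1}^{M}\sum_{s=1}^{n_{i,j}(T)}X_{i,j}(s)=\sum_{j=1}^{N}\sum_{i=1}^{M}\sum_{x\in\mathcal{X}^{i,j}}x\,N(x,n_{i,j}(T))$, where $N(x,n_{i,j}(T)):=\sum_{t=1}^{n_{i,j}(T)}I\{X_{i,j}(t)=x\}$. Taking expectations, applying Lemma~\ref{lemma_AnVa1} to each pair $(i,j)$ to obtain $\bigl|\bbE[N(x,n_{i,j}(T))]-\pi^{i,j}_{x}\bbE[n_{i,j}(T)]\bigr|\le 1/\pi^{i,j}_{min}$, summing over $x,i,j$, and using $x\le 1$ yields
\[
\Bigl|\bbE[S_{T}]-\sum_{j=1}^{N}\sum_{i=1}^{M}\mu_{i,j}\bbE[n_{i,j}(T)]\Bigr|\le\sum_{j=1}^{N}\sum_{i=1}^{M}\sum_{x\in\mathcal{X}^{i,j}}x/\pi^{i,j}_{min}=\tilde{K}_{\mathcal{X},P},
\]
reading the $\pi^{j}_{min}$ in the statement as $\pi^{i,j}_{min}$.

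Then I would pass from per-arm counts to per-matching counts. Under ${\tt dUCB_4}$ the distributed matching routine always returns a legitimate element of $\sP(N)$, so in every slot the players jointly play some matching $\mathbf{k}\in\sP(N)$; hence $\sum_{\mathbf{k}\in\sP(N)}n^{\mathbf{k}}(T)=T$ and $n_{i,j}(T)=\sum_{\mathbf{k}:\,k_{i}=j}n^{\mathbf{k}}(T)$, which gives $\sum_{i,j}\mu_{i,j}\bbE[n_{i,j}(T)]=\sum_{\mathbf{k}\in\sP(N)}\bbE[n^{\mathbf{k}}(T)]\bigl(\sum_{i=1}^{M}\mu_{i,k_{i}}\bigr)$. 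Plugging this into the regret definition (\ref{eq:dregret:M}), writing $T\mu^{**}=\sum_{\mathbf{k}}\bbE[n^{\mathbf{k}}(T)]\mu^{**}$, and using $\Delta_{\mathbf{k}}=\mu^{**}-\sum_{i}\mu_{i,k_{i}}$ together with $\Delta_{\mathbf{k}^{**}}=0$, I obtain $T\mu^{**}-\sum_{i,j}\mu_{i,j}\bbE[n_{i,j}(T)]=\sum_{\mathbf{k}\neq\mathbf{k}^{**}}\Delta_{\mathbf{k}}\bbE[n^{\mathbf{k}}(T)]$. Combining this with the displayed concentration bound and adding the cost term $C\bbE[m(T)]$ gives the claimed inequality.

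The only delicate point, and the step I would be most careful about, is the stopping-time/independence claim needed to invoke Lemma~\ref{lemma_AnVa1}: one must verify that, despite the interrupt signalling and the frame structure, the count $n_{i,j}(T)$ is measurable with respect to $\mathcal{G}^{i,j}\vee\mathcal{F}^{i,j}_{T}$. This holds because the ${\tt dUCB_4}$ indices, and hence the matchings chosen, depend on the other pairs' histories only through $\mathcal{G}^{i,j}$ and on pair $(i,j)$'s own history only through the rewards already drawn from it. Everything else is the same routine algebra as in the single-player Lemma~\ref{lemma_AnVa2}.
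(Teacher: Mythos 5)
Your proposal is correct and follows essentially the same route as the paper's own proof: the same pair-by-pair application of Lemma \ref{lemma_AnVa1} via the stopping-time/independence structure, the same decomposition of $S_T$ through the counts $N(x,n_{i,j}(T))$, and the same regrouping of per-arm counts into per-matching counts. Your reading of $\pi^{j}_{min}$ as $\pi^{i,j}_{min}$ matches the paper's intent, and your explicit justification of the measurability of $n_{i,j}(T)$ is a detail the paper asserts without elaboration.
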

\begin{proof}
Let  $(X_{i,j}(1), X_{i,j}(2), \ldots)$ denote the successive rewards for player $i$ from arm $j$.  Let $\mathcal{F}^{i,j}_{t}$ denote the $\sigma$-algebra generated by $(X_{i,j}(1), \ldots, X_{i,j}(t))$, $\mathcal{F}^{i,j}=\vee_{t \geq 1}\mathcal{F}^{i,j}_{t}$ and $\mathcal{G}^{i,j}=\vee_{(k,l) \neq (i,j) } F^{k,l}$.  Since arms are independent, $\mathcal{G}^{i,j}$ is independent of $\mathcal{F}^{i,j}$. Clearly, $n_{i,j}(T)$ is a stopping time with respect to $\mathcal{F}^{i,j} \vee \mathcal{G}^{i,j}_{T}$. The total reward is 
\[ S_{T}  =  \sum_{j=1}^{N} \sum_{i=1}^{M} \sum_{t=1}^{n_{i,j}(T)} X_{i,j}(t) =  \sum_{j=1}^{N} \sum_{i=1}^{M} \sum_{x \in \mathcal{X}^{i,j}} x N(x, n_{i,j}(T) )  \]
where $N(x, n_{i,j}(T)) := \sum_{t=1}^{n_{i,j}(T)} I\{X_{i,j}(t)=x\} $. Taking expectations and using the Lemma \ref{lemma_AnVa1},
\begin{equation*}
 \left \lvert \bbE[S_{T}] - \sum_{j=1}^{N} \sum_{i=1}^{M}  \sum_{x \in \mathcal{X}^{i,j}} x \pi^{i,j}_{x} \bbE[n_{i,j}(T)] \right \rvert  \leq \sum_{j=1}^{N} \sum_{i=1}^{M}  \sum_{x \in \mathcal{X}^{i,j}} x/\pi^{i,j}_{min}
\end{equation*}
which implies, 
\begin{equation*}
 \left \lvert \bbE[S_{T}] - \sum_{j=1}^{N} \sum_{i=1}^{M} \mu_{i,j} \bbE[n_{i,j}(T)] \right \rvert  \leq \tilde{K}_{\mathcal{X}, P}
\end{equation*}
where $\tilde{K}_{\mathcal{X}, P} = \sum_{j=1}^{N} \sum_{i=1}^{M} \sum_{x \in \mathcal{X}^{i,j}} x/\pi^{i,j}_{min}$.
Now, 
\begin{eqnarray*}
\sum_{j=1}^{N} \sum_{i=1}^{M} \mu_{i,j} \bbE[n_{i,j}(T)]  && = \sum_{j=1}^{N} \sum_{i=1}^{M} \sum_{ k \in \sP(N), (i,j) \in k } \mu_{i,k_{i}} \bbE[n_{i,k_{i}}(T)] = \sum_{ k \in \sP(N)} \sum_{i=1}^{M} \mu_{i,k_{i}} \bbE[n_{i,k_{i}}(T)] \\
&& = \sum_{ k \in \sP(N)}  \mu^{k} \bbE[n^{k}(T)] 
\end{eqnarray*}
where $\mu^{k} = \sum_{i=1}^{M} \mu_{i, k_{i}}$. Since regret is defined as in the equation (\ref{eq:dregret:M}), 
\begin{equation}
\left \lvert \tilde{\sR}_{M, \alpha}(T) - \left( T \mu^{**} - \sum_{ k \in \sP(N), (i,j) \in k } \mu_{i,k_{i}} \bbE[n_{i,k_{i}}(T)]  + C \bbE_{\alpha}[m(T)] \right) \right \rvert \leq \tilde{K}_{\mathcal{X},P}.
\end{equation}
\end{proof}
The main result of this section is the following.
\begin{theorem}
\label{thm:ducb4_regret_M} 
(i) Let $\epsilon > 0$ be the precision of the bipartite matching algorithm and the precision of the index representation. If $\Delta_{min}$,  $|\mathcal{X}|_{max}$ and $\rho_{min}$ are known, choose $\epsilon > 0$ such that $\epsilon < \D_{min}/(M+1)$ and  $\kappa > (112+56M) |\mathcal{X}|_{max}^{2}/\rho_{min}$. Let $L$ be the length of a frame. Then, the expected regret of the ${\tt dUCB_4}$ algorithm with index (\ref{eq:dindex:M}) for the decentralized MAB problem with Markovian rewards and per computation cost $C$  is given by \\
$\tilde{\sR}_{M, \tt dUCB_4}(T)$
\begin{eqnarray*}
 &\leq & (L\Delta_{max} + C(f(L))   (1+\log T) ) \cdot \left( \frac{4 M^{3} \kappa N  \log T }{  (\Delta_{min} - (M+1)\e)^{2} }   + (2MD+1) MN  \right) + \tilde{K}_{\mathcal{X}, P}. 
\end{eqnarray*}
Thus, \(\tilde{\sR}_{M, \tt dUCB_4}(T)= O(\log^{2} T).\)\\
(ii)  If $\Delta_{min}$,  $|\mathcal{X}|_{max}$ and $\rho_{min}$ are unknown, denote $\epsilon_{min} = \Delta_{min}/(2(M+1))$ and let $L_{t} \rightarrow \infty$ as $t \rightarrow \infty$. Also,  choose a positive monotone sequence $\{\kappa_{t}\}$ such that $\kappa_{t} \rightarrow \infty $ as $t \rightarrow \infty$ and $\kappa_{t} \leq t$.  Then, $\tilde{\sR}_{M, \tt dUCB_4}(T) = O( C(f(L_{T})) \kappa_{T} \log^{2} T)$. Thus by choosing an arbitrarily-slowly increasing sequences, we can make the regret arbitrarily close to $\log^{2} T$.
\end{theorem}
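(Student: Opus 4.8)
The plan is to combine the argument of Theorem \ref{thm:ducb4} (decentralized, i.i.d.\ rewards) with the Markov-chain concentration machinery of Theorem \ref{thm:ucb4_regret_M} (single player, Markovian rewards). First I would apply the lemma immediately preceding the theorem to reduce the regret to a weighted count of suboptimal matchings, $\tilde{\sR}_{M, \tt dUCB_4}(T) \le \Delta_{max}\,\bbE[\tilde n(T)] + C\,\bbE[m(T)] + \tilde{K}_{\mathcal{X},P}$, where $\tilde n(T)=\sum_{i=1}^{M}\sum_{j=1}^{N}\tilde n_{i,j}(T)$ counts the decision frames in which ${\tt dBM_{\e}}$ returns a non-optimal matching, with the surrogate counters $\tilde n_{i,j}$ defined exactly as in the proof of Theorem \ref{thm:ducb4} (incremented for some argmin pair whenever ${\tt dBM_{\e}}$ is non-optimal). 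As there, I would establish the bound for frame length $L=1$ first and recover general $L$ at the end via a Theorem \ref{thm:ucb1}-style argument, replacing the per-frame cost by $C(f(L))$.

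Next I would copy the peeling step of the proof of Theorem \ref{thm:ducb4} down to the analogue of (\ref{eq:njbd_st1}): a suboptimal matching $\mathbf{k}$ is chosen at frame $m+2^{p}-1$ only if $\sum_i g_{i,k^{**}_i}(m+2^{p}-1) \le (M+1)\e + \sum_i g_{i,k_i}(m+2^{p}-1)$, where the slack $(M+1)\e$ absorbs $M$ index-rounding errors plus the $\e$-optimality of ${\tt dBM_{\e}}$, and unwinding the resulting max/min over the sample counts reduces the event to the union over the $2M$ random events $A_i,B_i$ and the two deterministic events $C,D$ of display (\ref{events_multiuser}). The only genuinely new ingredient is the tail bound for $A_i$ and $B_i$: in place of Chernoff--Hoeffding I would use the Markov-chain concentration inequality (Lemma \ref{lemma_Le}), following the derivation of (\ref{eq:event1prob_m})--(\ref{eq:event2prob_m}) --- decompose $\overline{X}_{i,k_i}$ into state-occupation counts, apply the inequality to the centered state-indicator functions $f(y)=(I\{y=x\}-\pi^{i,k_i}_{x})/\hat{\pi}^{i,k_i}_{x}$, bound the initial-distribution factor $N_{\lambda}\le 1/\pi_{min}$, and use $x_{max}\le 1$ and $\hat{\pi}_{max}\le 1$ --- which yields $\bbP(A_i),\bbP(B_i)\le D\,(m+2^{p})^{-\kappa\rho_{min}/(28|\mathcal{X}|^{2}_{max})}$ with $D=|\mathcal{X}|_{max}/\pi_{min}$. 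Taking $l=\lceil 4M^{2}\kappa\log T/(\Delta_{min}-(M+1)\e)^{2}\rceil$ makes event $C$ impossible by the same computation as (\ref{events_CD}) with the $3$ there replaced by $\kappa$, and event $D$ is impossible because $\e<\Delta_{min}/(M+1)$.

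Assembling these, the residual contribution to $\bbE[\tilde n_{i,j}(T)]$ is $2MD\sum_{m\ge1}\sum_{p\ge0}2^{p}(m+2^{p})^{2M}\,(m+2^{p})^{-\kappa\rho_{min}/(28|\mathcal{X}|^{2}_{max})}$, the factor $(m+2^{p})^{2M}$ coming from the $2M$ nested sums over the sample counts and the factor $2M$ from the union bound over $A_1,\dots,A_M,B_1,\dots,B_M$. For this to be at most $2MD\sum_{m}\sum_{p}2^{p}(m+2^{p})^{-4}<2MD$ one needs the power of $m+2^{p}$, namely $2M-\kappa\rho_{min}/(28|\mathcal{X}|^{2}_{max})$, to be at most $-4$, i.e.\ exactly the hypothesis $\kappa>(112+56M)|\mathcal{X}|^{2}_{max}/\rho_{min}$; hence $\bbE[\tilde n_{i,j}(T)]\le 4M^{2}\kappa\log T/(\Delta_{min}-(M+1)\e)^{2}+(2MD+1)$, and summing over the $MN$ pairs gives $\bbE[\tilde n(T)]\le 4M^{3}\kappa N\log T/(\Delta_{min}-(M+1)\e)^{2}+(2MD+1)MN$. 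The inequality $\bbE[m(T)]\le\bbE[\tilde n(T)]\,(1+\log T)$ is exactly as in the proof of Theorem \ref{thm:ucb4_regret} (epochs reset at most once per suboptimal play, each optimal stretch triggering at most $\log T$ recomputations since epoch lengths are not reset within it), and substituting everything, together with the $L$ and $C(f(L))$ factors, yields part (i). For part (ii) I would repeat the argument with $\kappa$ replaced by $\kappa_{t}$ and $\e$ by $\e_{t}=f(L_{t})$: up to the first time $t_0$ at which $\e_{t_0}<\Delta_{min}/(2(M+1))$ and $\kappa_{t_0}$ is large enough, the regret may be linear, but beyond $t_0$ the power $2M-\kappa_{m+2^{p}}\rho_{min}/(28|\mathcal{X}|^{2}_{max})$ eventually drops below $-4$, so the residual double sum still converges, and monotonicity of $C(f(\cdot))$ and of $\{\kappa_{t}\}$ gives $\tilde{\sR}_{M, \tt dUCB_4}(T)=O(C(f(L_{T}))\,\kappa_{T}\log^{2}T)$.

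I expect the main obstacle to be bookkeeping rather than conceptual: one must keep three ingredients in lockstep --- the $(M+1)\e$ slack created by decentralizing, the $2M$-fold multiplicity of sample-count sums, and the Markov tail exponent $\kappa\rho_{min}/(28|\mathcal{X}|^{2}_{max})$ --- and check that they combine to give precisely the threshold $\kappa>(112+56M)|\mathcal{X}|^{2}_{max}/\rho_{min}$ and the leading constant $4M^{3}\kappa N$. A secondary point worth stating explicitly is that the index uses $\log n_i(t)$, the number of collision-free plays of player $i$, not $\log t$; as in Theorem \ref{thm:ducb4} this is harmless because $n_i(t)$ only undercounts $t$, so the tails bounded by powers of $m+2^{p}$ remain valid, but it should be flagged.
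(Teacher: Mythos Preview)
Your proposal is correct and follows essentially the same route as the paper's own proof: reduce via the preceding lemma to bounding $\bbE[\tilde n(T)]$ and $\bbE[m(T)]$, copy the peeling argument of Theorem~\ref{thm:ducb4} down to the events $A_i,B_i,C,D$ of display~(\ref{events_multiuser}), replace Chernoff--Hoeffding by the Markov-chain tail bounds (\ref{eq:event1prob_m})--(\ref{eq:event2prob_m}), and observe that the $2M$ nested sample-count sums turn the exponent requirement into precisely $\kappa>(112+56M)|\mathcal{X}|^{2}_{max}/\rho_{min}$. Your bookkeeping of the constants, the choice $l=\lceil 4M^{2}\kappa\log T/(\Delta_{min}-(M+1)\e)^{2}\rceil$, and the part~(ii) argument with $\kappa_t,\e_t$ all match the paper; the extra remark you flag about $\log n_i(t)$ versus $\log t$ is not spelled out in the paper but is a valid point to note.
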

\begin{proof}
(i) We skip the initial steps as they are  same as in the  proof of  Theorem \ref{thm:ducb4}. We start by bounding $\tilde{n}_{i,j}(T)$ as defined in the  proof of  Theorem \ref{thm:ducb4}. Then, from equation (\ref{eq:njbd_st1}), we get
$\tilde{n}_{i,j}(T) $
\begin{eqnarray} 
&& \leq  l + \sum_{m=1}^{\infty} \sum_{p=0}^{\infty } 2^{p} \sum_{s_{1,k^{**}_{1}}=1}^{m+2^{p}} \ldots \sum_{s_{M,k^{**}_{M}}=1}^{m+2^{p}} \sum_{s^{'}_{1,k_{1}}=1}^{m+2^{p}} \ldots \sum_{s^{'}_{M,k_{M}}=1}^{m+2^{p}} I\{  \sum_{i=1}^{M} \left( \overline{X}_{i,k^{**}_{i}}(m+2^{p})+c_{m+2^{p}, s_{i,k^{**}_{i}}} \right) \nonumber \\
&& \hspace{4cm} \leq  (M+1) \epsilon + \sum_{i=1}^{M} \left( \overline{X}_{i,k_{i}}(m+2^{p})+c_{m+2^{p}, s_{i,k_{i}}^{'}} \right) \} 
\end{eqnarray}
Now, the event in the parenthesis $\{\cdot \}$ above implies at least one of the events ($A_{i}, B_{i}, C, D$) given  in the display (\ref{events_multiuser}). From the proof of Theorem \ref{thm:ucb4_regret_M} (equations (\ref{eq:event1prob_m}, \ref{eq:event2prob_m}),
\(
\bbP(A_{i}) \leq  D  (m+2^{p})^{- \frac{\kappa \rho_{min}}{28 |\mathcal{X}|_{max}|^{2}} }, \hspace{1cm} \bbP(B_{i}) \leq  D  (m+2^{p})^{- \frac{\kappa \rho_{min}}{28 |\mathcal{X}|_{max}|^{2}} }, 1 \leq i \leq M.
\)
Similar to the steps in display (\ref{events_CD}), we can show that the event $C$ is false. Also, the event $D$ is false by assumption. So, similar to the proof of the Theorem \ref{thm:ducb4} (c.f. display (\ref{eq:ntilde})  we get,
\begin{eqnarray*}
\bbE[\tilde{n}_{i,j}(T)] &\leq & \left \lceil \frac{4 M^{2} \kappa \log T }{  (\Delta_{min} -(M+1)\e )^{2} } \right \rceil + \sum_{m=1}^{\infty} \sum_{p=0}^{\infty } 2^{p} \sum_{s_{1,k^{**}_{1}}=1}^{m+2^{p}} \ldots \sum_{s_{M,k^{**}_{M}}=1}^{m+2^{p}} \\
~~&& \hspace{3cm} \sum_{s^{'}_{1,k_{1}}=1}^{m+2^{p}} \ldots \sum_{s^{'}_{M,k_{M}}=1}^{m+2^{p}} 2M D (m+2^{p})^{- \frac{\kappa \rho_{min}}{28 |\mathcal{X}|_{max}|^{2}} }  \\
& \leq & \left \lceil \frac{4 M^{2} \kappa \log T }{  (\Delta_{min} - (M+1)\e)^{2} } \right \rceil + 2 M D \sum_{m=1}^{\infty}  \sum_{p=0}^{\infty } 2^{p} (m+2^{p})^{- \frac{\kappa \rho_{min} - 56M|\mathcal{X}|_{max}^{2} }{ 28|\mathcal{X}|_{max}^{2}  }  }  \\
&\leq &  \frac{4 M^{2} \kappa  \log T }{  (\Delta_{min} - (M+1)\e)^{2} }   + (2MD+1).
\end{eqnarray*}
when $\kappa > (112+56M) |\mathcal{X}|_{max}^{2}/\rho_{min}$. Now, putting it all together, we get
\begin{eqnarray}
\label{dUCB_subopt_M}
\bbE[\tilde{n}(T)] &= & \sum_{i=1}^{M} \sum_{j=1}^{N}  \bbE[\tilde{n}_{i,j}(T)] \nonumber \leq   \frac{4 M^{3} \kappa N  \log T }{  (\Delta_{min} - (M+1)\e)^{2} }   + (2MD+1) MN. 
\end{eqnarray}
Now, by proof of the Theorem  \ref{thm:ucb4_regret} (equation (\ref{eq:ucb4:m})), $\bbE [m(T)]  \leq \bbE[\tilde{n}(T)] (1+ \log T).$
We can now bound the regret,
\begin{eqnarray*}
 \tilde{\sR}_{M, \tt dUCB_4}(T) &=& \sum_{k \in \sP(N), k \neq k^{**}} \Delta_{k} \sum_{i=1}^{M}  \bbE[\tilde{n}_{i,k_{i}}(T)] + C \bbE[m(T)] + \tilde{K}_{\mathcal{X}, P}  \\
& \leq &  \Delta_{max} \sum_{k \in \sP(N), k \neq k^{**}} \sum_{i=1}^{M} \bbE[\tilde{n}_{i,k_{i}}(T)] + C \bbE[m(T)] + \tilde{K}_{\mathcal{X}, P}  
\end{eqnarray*}\begin{eqnarray*}& =& \Delta_{max} \bbE[\tilde{n}(T)] + C \bbE[m(T)] + \tilde{K}_{\mathcal{X}, P} .
\end{eqnarray*}
For a general $L$, by Theorem \ref{thm:ucb1} 
\begin{eqnarray*}
\tilde{\sR}_{M, \tt dUCB_4}(T)  &\leq & L \Delta_{max} \bbE[\tilde{n}(T)] + C(f(L)) \bbE[m(T)] + \tilde{K}_{\mathcal{X}, P} . \\
& \leq & (L \Delta_{max} + C(f(L))  (1+ \log T)) \bbE[\tilde{n}(T)] + \tilde{K}_{\mathcal{X}, P} .
\end{eqnarray*}
 Now, using the bound (\ref{dUCB_subopt_M}), we get the desired upper bound on the expected regret. \\
(ii) This can now easily be obtained using the above and following Theorem \ref{thm:ducb4}.
\end{proof}


\section{Distributed Bipartite Matching: Algorithm and Implementation}\label{sec:dbm}

In the previous section, we referred to an unspecified distributed algorithm for bipartite matching ${\tt dBM}$, that is used by the ${\tt dUCB_4}$ algorithm. We now present one such algorithm, namely, Bertsekas' auction algorithm \cite{Be92}, and its distributed implementation. We note that the presented algorithm is not the only one that can be used. The ${\tt dUCB_4}$ algorithm will work with a distributed implementation of any bipartite matching algorithm, e.g. algorithms given in \cite{ZaSpPa08}. 

Consider a bipartite graph with $M$ players on one side, and $N$ arms on the other, and $M\leq N$. Each player $i$ has a value $\mu_{i,j}$ for each arm $j$. Each player knows only his own values. Let us denote by $k^{**}$, a matching that maximizes the matching surplus $\sum_{i,j} \mu_{i,j}x_{i,j}$, where the variable $x_{i,j}$ is 1 if $i$ is matched with $j$, and 0 otherwise. Note that $\sum_i x_{i,j} \leq 1, \forall j$, and $\sum_j x_{i,j} \leq 1, \forall i$. Our goal is to find an $\epsilon$-optimal matching. We call any matching $k^*$ to be $\epsilon$-optimal if $\sum_{i} \mu_{i,k^{**}(i)} - \sum_{i} \mu_{i,k^*(i)} \leq \epsilon$.


\begin{algorithm}{}
\caption{: ${\tt dBM_{\e}}$ ( Bertsekas Auction Algorithm)}
\label{algo:dBM}
{\small \begin{algorithmic}[1]
\STATE All players $i$ initialize prices $p_j = 0, \forall ~\text{channels} ~j$;
\WHILE{(prices change)}
\STATE Player $i$ communicates his preferred arm $j_i^*$ and bid $b_i = \max_j (\mu_{ij}-p_j) - \text{2max}_j(\mu_{ij}-p_j) + \frac{\epsilon}{M}$ to all other players.
\STATE Each player determines on his own if he is the \textit{winner} $i_j^*$ on arm $j$;
\STATE All players set prices $p_j = \mu_{i_j^*,j}$;
\ENDWHILE
\end{algorithmic}
}
\end{algorithm}

Here, $\text{2max}_j$ is the second highest maximum over all $j$. The \textit{best} arm for a player $i$ is arm $j_i^* = \arg \max_j (\mu_{i,j}-p_j)$. The \textit{winner} $i_j^*$ on an arm $j$ is the one with the highest bid.  

The following lemma in \cite{Be92} establishes that Bertsekas' auction algorithm will find the $\e$-optimal matching in a finite number of steps. 

\begin{lemma}\cite{Be92}
Given $\epsilon>0$, Algorithm~\ref{algo:dBM} with rewards $\mu_{i,j}$, for player $i$ playing the $j$th arm, converges to a matching $k^*$ such that $\sum_{i} \mu_{i, k^{**}(i)} - \sum_{i} \mu_{i, k^*(i)} \leq \epsilon$ where $k^{**}$ is an optimal matching. Furthermore, this convergence occurs in less than $(M^2\max_{i,j}\{\mu_{i,j}\})/\epsilon$ iterations.
\end{lemma}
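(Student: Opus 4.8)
The plan is to follow the classical analysis of Bertsekas' auction algorithm via the notion of $\epsilon$-complementary slackness ($\epsilon$-CS). Call a partial assignment $S$ (a matching of some subset of players to arms) together with a price vector $p=(p_j)$ \emph{$\delta$-CS} if $\mu_{i,S(i)}-p_{S(i)} \ge \max_k(\mu_{i,k}-p_k)-\delta$ for every matched player $i$. First I would prove, by induction on the iterations of Algorithm~\ref{algo:dBM}, that the pair $(S,p)$ maintained by the algorithm is $(\epsilon/M)$-CS at the start of every iteration. It holds trivially for the empty assignment with all-zero prices, and in the inductive step one checks that: a player who bids on and wins its preferred arm $j_i^*$ ends up with profit on $j_i^*$ equal to its second-best profit minus $\epsilon/M$, hence within $\epsilon/M$ of its maximum profit; a player whose assigned arm did not change price keeps its old $\epsilon/M$-CS margin (the other arms' prices only went up); and a player who is outbid becomes unmatched and imposes no constraint. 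In the same step I would record the structural facts needed later: once an arm receives a bid it becomes, and stays, assigned; every arm's price is nondecreasing; and each successful bid raises the winning arm's price by at least $\epsilon/M$.

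Next I would bound the number of iterations. The key observation is that before termination at most $M-1$ players, hence at most $M-1$ arms, are assigned; since $M\le N$ there is always an unassigned arm, and an unassigned arm has never been bid on and so still has price $0$. Consequently, whenever player $i$ bids on its best arm $j$, its profit $\mu_{i,j}-p_j=\max_k(\mu_{i,k}-p_k)$ is at least $\mu_{i,j_0}-0\ge 0$ for the price-zero arm $j_0$ (using that the rewards $\mu_{i,j}$ are nonnegative), so $p_j\le\mu_{i,j}\le\max_{i,j}\mu_{i,j}$, and the post-bid price still does not exceed $\max_{i,j}\mu_{i,j}$ up to the additive $\epsilon/M$ increment. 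Since the set of ever-assigned arms is monotone nondecreasing and has size $M$ at termination, only $M$ distinct arms are ever bid on; each starts at price $0$, ends at price at most $\max_{i,j}\mu_{i,j}$, and rises by at least $\epsilon/M$ per bid, so the total number of bids is at most $M\cdot\big(M\max_{i,j}\mu_{i,j}/\epsilon\big)=M^2\max_{i,j}\mu_{i,j}/\epsilon$. Every iteration in which prices change contains at least one bid, so the algorithm terminates in fewer than $M^2\max_{i,j}\mu_{i,j}/\epsilon$ iterations.

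Finally, at termination all $M$ players are matched, so $S=k^*$ is a full matching satisfying $(\epsilon/M)$-CS. Given any optimal matching $k^{**}$, summing the CS inequality $\mu_{i,k^*(i)}-p_{k^*(i)}\ge\mu_{i,k^{**}(i)}-p_{k^{**}(i)}-\epsilon/M$ over the $M$ players yields $\sum_i\mu_{i,k^*(i)}-\sum_i p_{k^*(i)}\ge\sum_i\mu_{i,k^{**}(i)}-\sum_i p_{k^{**}(i)}-\epsilon$. Every arm outside the image of $k^*$ still has price $0$, so $\sum_i p_{k^{**}(i)}\le\sum_i p_{k^*(i)}$, and hence $\sum_i\mu_{i,k^{**}(i)}-\sum_i\mu_{i,k^*(i)}\le\epsilon$, the claimed $\epsilon$-optimality. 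I expect the main obstacle to be the bookkeeping in the first step: verifying that a single synchronous (Jacobi) bidding round, in which several players may bid on the same arm simultaneously and only the highest bidder wins, preserves $(\epsilon/M)$-CS for \emph{all} players at once, together with making the ``unassigned arm keeps price zero'' invariant precise, since both the price bound and the iteration count rest on it.
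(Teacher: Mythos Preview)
The paper does not prove this lemma at all; it is stated with a citation to \cite{Be92} and left without proof. Your proposal reproduces the classical Bertsekas argument via $\epsilon$-complementary slackness, which is exactly the proof in \cite{Be92}, so in that sense you are aligned with the source the paper defers to.

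Your argument is essentially correct. A couple of small points worth tightening. First, the claim ``before termination at most $M-1$ players are assigned'' is literally true only in the Gauss--Seidel (one-bidder-at-a-time) variant; in the synchronous variant described in Algorithm~\ref{algo:dBM} all players bid every round, and the right invariant is simply that any arm that has never received a bid retains price $0$, and as long as the algorithm has not terminated some such arm exists (since the set of ever-bid arms is monotone and of size at most $M\le N$). You already have this invariant stated, so just lean on it directly rather than on a player count. Second, your price bound gives at most $M\max_{i,j}\mu_{i,j}/\epsilon$ bids \emph{per arm} and hence at most $M^2\max_{i,j}\mu_{i,j}/\epsilon$ bids in total; since each synchronous iteration contains at least one successful bid (otherwise prices would not change and the loop would exit), the iteration bound follows. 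The final $\epsilon$-optimality step via summing the $(\epsilon/M)$-CS inequalities and using that arms outside the image of $k^*$ have price $0$ is clean and correct.
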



The temporal structure of the ${\tt dUCB_4}$ algorithm is such that time is divided into frames of length $L$. Each frame is either a \textit{decision} frame, or an \textit{exploitation} frame. In the exploitation frame, each player plays the arm it was allocated in the last decision frame.  The distributed bipartite matching algorithm (e.g. based on Algorithm \ref{algo:dBM}), is run in the decision frame. The decision frame has an interrupt phase of length $M$ and negotiation phase of length $L-M$. We now describe an implementation structure for these phases in the decision frame.

\begin{figure}[tbh]
\centering{\includegraphics[scale=0.4]{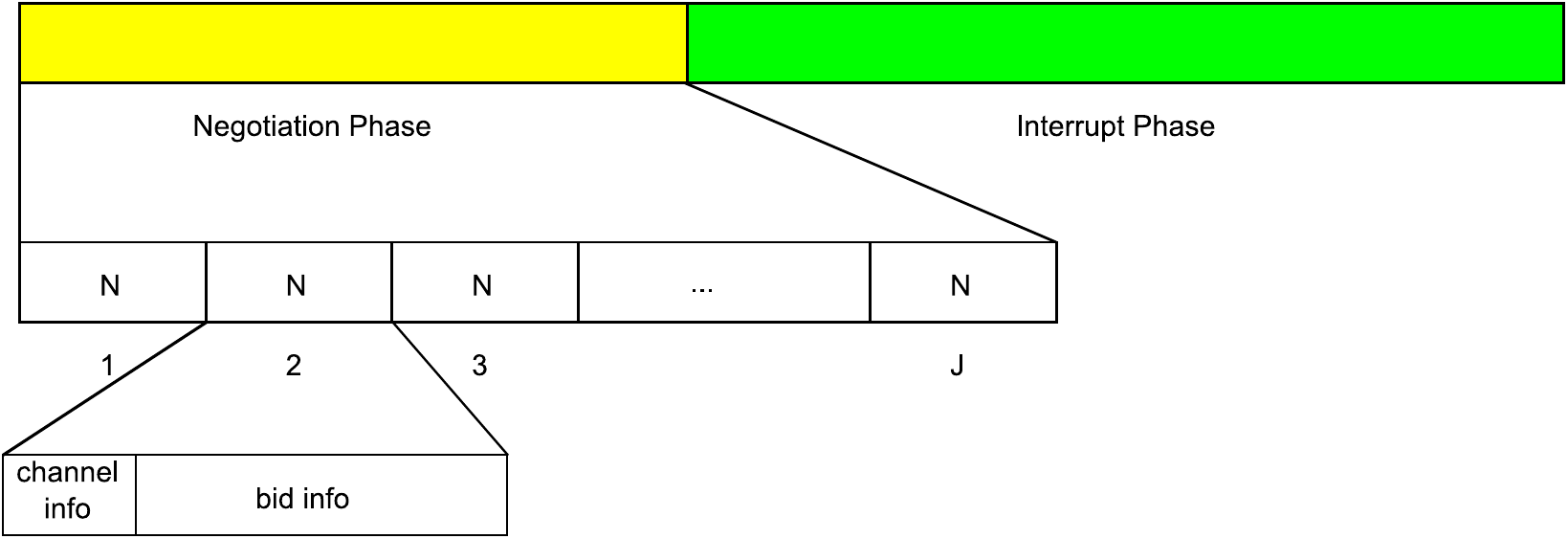}}
\caption{Structure of the decision frame}
\label{fig:negphasephy}
\end{figure}
\noindent \textbf{Interrupt Phase:}  The interrupt phase can be implemented very easily. It has length $M$ time slots. On a pre-determined channel, each player by turn transmits a `1' if the arm with which it is now matched has changed, `0' otherwise. If any user transmits a `1', everyone knows that the matching has changed, and they reset their counter $\eta=1$.

\noindent \textbf{Negotiation Phase:} The information needed to be exchanged to compute an $\e$-optimal matching is done in the negotiation phase. We first provide a \textit{packetized implementation} of the negotiation phase. The negotiation phase consists of $J$ subframes of length $M$ each (See figure \ref{fig:negphasephy}). In each subframe, the users transmit a packet by turn. The packet contains bid information: (channel number, bid value). Since all users transmit by turn, all the users know the bid values by the end of the subframe, and can compute the new allocation, and the prices independently. The length of the subframe $J$ determines the precision $\epsilon$ of the distributed bipartite matching algorithm. Note that in the packetized implementation, $\e_1=0$, i.e., bid values can be computed exactly, and for a given $\e_2$, we can determine $J$, the number of rounds the ${\tt dBM}$ algorithm \ref{algo:dBM} runs for, and returns an $\e_2$-optimal matching. 

If a packetized implementation is not possible, we can give a \textit{physical implementation}. Our only assumption here is going to be that each user can observe a channel, and determine if there was a successful transmission on it, a collision, or no transmission, in a given time slot. The whole negotiation phase is again divided into $J$ sub-frames. In each sub-frame, each user transmits by turn. It simply transmits $\lceil{\log M} \rceil $ bits to indicate a channel number, and then $\lceil{\log 1/\e_1}\rceil$ bits to indicate its bid value to precision $\e_1$. The number of such sub-frames $J$ is again chosen so that the ${\tt dBM}$ algorithm (based on Algorithm \ref{algo:dBM}) returns an $\e_2$-optimal matching.

\section{Simulations}
\label{sec:simulations}
\begin{figure}[b]
\centering{
\hspace*{-10mm}\includegraphics[width=4in]{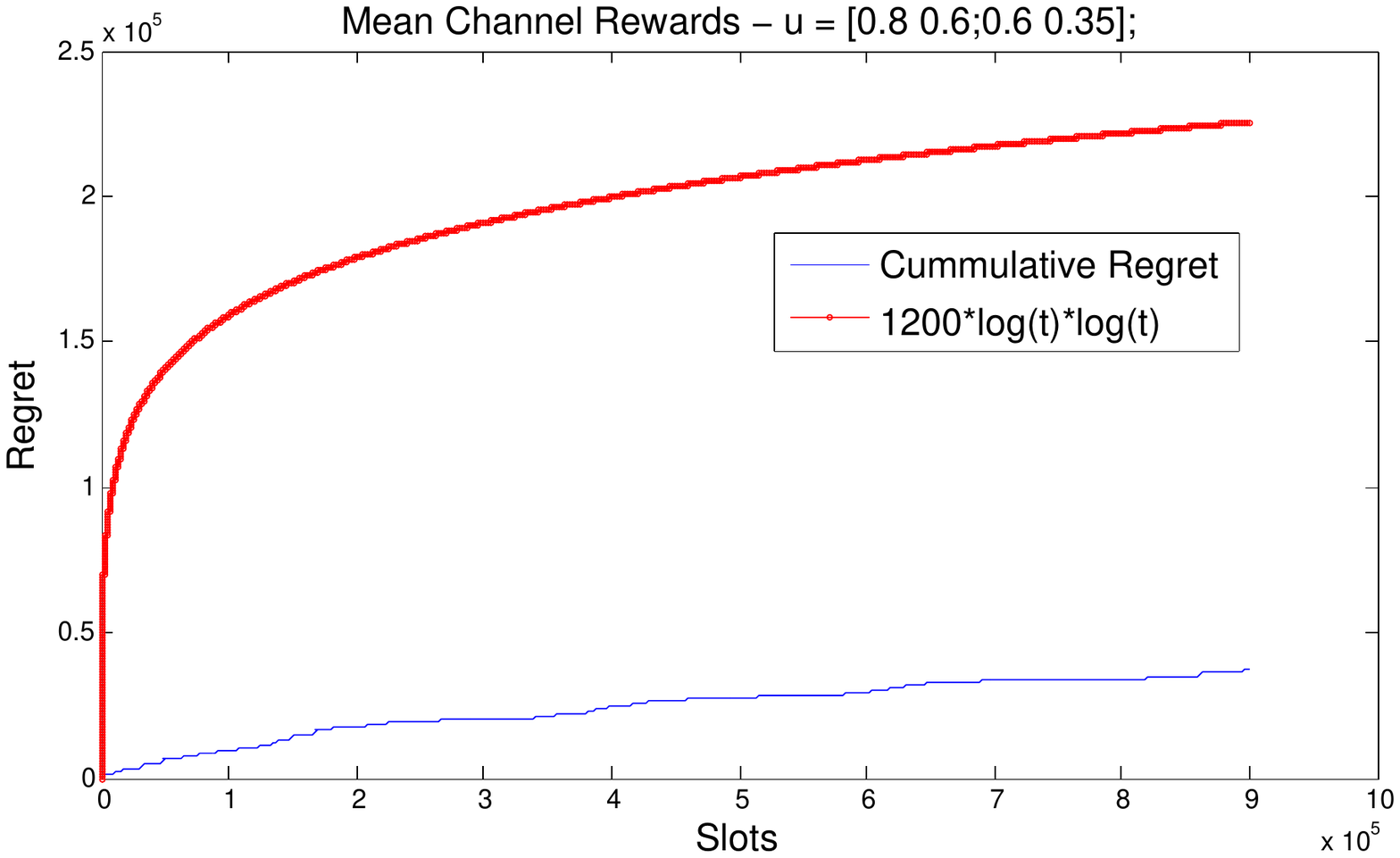}\includegraphics[ width=4.1in]{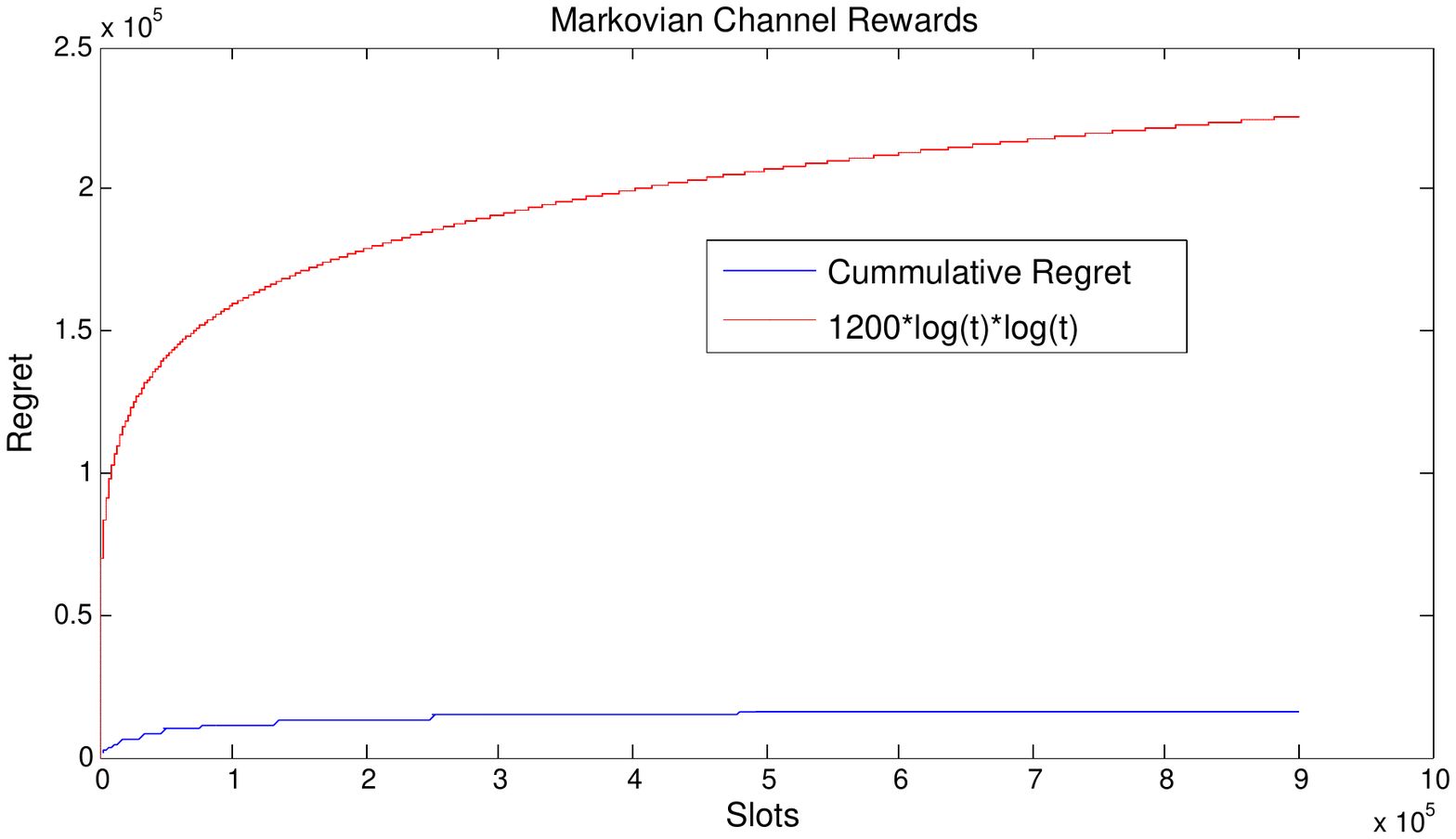}}
\caption{(i) Cumulative regret : $2$ users, $2$ channels; i.i.d. channels; Mean reward matrix = $[0.8, 0.6; 0.6, 0.35]$.
(ii) Cumulative regret : $2$ users, $2$ channels; Markovian channels.}
\label{fig:decentralized-2}
\end{figure}
We illustrate the empirical performance of the ${\tt dUCB_4}$ algorithm when the successive rewards from a channel are  i.i.d. and when they are Markovian. Consider two users and two channels. In the i.i.d. case, each channel has rewards that are generated by a Bernoulli distribution taking values $0$ and $1$. The first user has mean rewards of $0.8$ and $0.6$ for channels $1$ and $2$ respectively. The second user has mean rewards of $0.6$ and $0.35$. The algorithm's performance, averaged over 50 runs, is shown in Figure~\ref{fig:decentralized-2} (i). It shows cumulative regret with time. The red bold curve is the theoretical upper bound we derived, while the blue curve is the observed regret. The algorithm seems to perform much better than even the poly-log regret upper bound we derived.

In the Markovian case, rewards are generated by a Markov chain having states $0$ and $1$. The mean reward on a  channel is given by its stationary distribution, i.e., the probability the Markov chain is in state $1$, $\pi_1$. The properties of the Markov chains are given in Table~\ref{tab:sim-markov-example}. The performance of the ${\tt dUCB_4}$ algorithm on this model, averaged over 50 runs, is shown in Figure~\ref{fig:decentralized-2} (ii). Once again, the algorithm seems to perform much better than even the poly-log regret upper bound we derived.

\begin{table}
\caption{Markov Chain Parameters : Transition probability and Stationary distribution}
\begin{center}
\begin{tabular}{| c | c | c | c |}\hline
\label{tab:sim-markov-example}
User & Channel & $p_{01}$,$p_{10}$ & $\pi$ \\\hline
1 & 1 & $0.3$,$0.5$ & $0.3/0.8$\\\hline
1 & 2 & $0.2$,$0.6$ & $0.2/0.8$\\\hline
2 & 1 & $0.6$,$0.3$ & $0.6/0.9$\\\hline
2 & 2 & $0.7$,$0.2$ & $0.7/0.9$\\\hline
\end{tabular}
\end{center}
\end{table}

\section{Conclusions}\label{sec:conclusions}
We have proposed a ${\tt dUCB_4}$ algorithm for decentralized learning in multi-armed bandit problems that achieves a regret of near-$O(\log^{2}(T))$. Finding a lower bound is usually quite difficult, and currently a work in progress.  

\bibliographystyle{IEEEtran}
\bibliography{refs-dmab}

\appendix

Let $\left(X_{t}, t=1, 2, \ldots \right)$ be an irreducible, aperiodic and reversible Markov chain on a finite state space $\mathcal{X}$ with  transition probability matrix $P$, a stationary distribution $\pi$ and  an initial distribution $\lambda$.  Let $\mathcal{F}_{t}$ be the $\sigma$-algebra generated by $  \left(X_{1}, X_{2}, \ldots, X_{t}\right)$. Denote $N_{\lambda} =  \left \lVert \left( \frac{\lambda_{x}}{\pi_{x}}, x \in \mathcal{X} \right) \right \rVert_{2}$. 

\begin{lemma}\cite{AnVaWa87b}
\label{lemma_AnVa1}
Let $\mathcal{G}$ be a $\sigma$-algebra independent of $\mathcal{F} = \vee_{t \geq 1 } F_{t}$. Let $\tau$ be a stopping time of $\mathcal{F}_{t} \vee \mathcal{G}$. Let $N(x, \tau) := \sum_{t=1}^{\tau} I\{X_{t}=x\}  $. Then, 
\(
| \bbE[N(x, \tau)] - \pi_{x} \bbE[\tau] | \leq K,
\)
where $K \leq 1/\pi_{min} $ and $\pi_{min}=\min_{x \in \mathcal{X} } \pi_{x}$. $K$ depends on $P$.
\end{lemma}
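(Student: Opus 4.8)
The plan is to reduce the statement to an optional-stopping identity by introducing the solution of Poisson's equation associated with the centered indicator $f(y) := I\{y=x\} - \pi_x$, and then to absorb everything except one boundary term into a martingale; the constant $1/\pi_{min}$ will emerge as a bound on the oscillation of the Poisson solution. First I would record that $\sum_{y}\pi_y f(y) = \pi_x - \pi_x = 0$, so that the equation $(I-P)h = f$ admits a solution $h:\mathcal{X}\to\mathbb{R}$ (unique up to an additive constant) because $P$ is the transition matrix of a finite, irreducible chain. Aperiodicity guarantees that $P^t$ converges to the rank-one stationary matrix, so $h$ may be written as the convergent series $h(y) = \sum_{t\ge 0}\bbE_{y}[f(X_t)] = \sum_{t\ge 0}\big(\bbP_{y}(X_t=x)-\pi_x\big)$, the $(y,x)$ entry of the fundamental (deviation) matrix. (Note that only finiteness and irreducibility are used here; reversibility is not needed for this lemma.)

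From $Ph = h-f$ I get $\bbE[h(X_{t+1})\mid\mathcal{F}_t] = (Ph)(X_t) = h(X_t)-f(X_t)$, which rearranges to
\[
f(X_t) = h(X_t) - h(X_{t+1}) + D_{t+1}, \qquad D_{t+1} := h(X_{t+1}) - \bbE[h(X_{t+1})\mid\mathcal{F}_t].
\]
The key structural point is the interaction with $\mathcal{G}$: since $\mathcal{G}$ is independent of $\mathcal{F}$ and the chain is Markov, conditioning on the enlarged filtration $\mathcal{H}_t := \mathcal{F}_t\vee\mathcal{G}$ does not change the one-step prediction, i.e. $\bbE[h(X_{t+1})\mid\mathcal{H}_t] = (Ph)(X_t)$. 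Hence $(D_{t+1})$ is a martingale-difference sequence for $(\mathcal{H}_t)$, and $\tau$ is a stopping time of exactly this filtration. Telescoping gives
\[
N(x,\tau) - \pi_x \tau = \sum_{t=1}^{\tau} f(X_t) = h(X_1) - h(X_{\tau+1}) + \sum_{t=1}^{\tau} D_{t+1}.
\]
Because $\mathcal{X}$ is finite, $h$ and hence the $D_{t+1}$ are uniformly bounded; applying optional stopping to the stopped martingale (first at $\tau\wedge n$, then letting $n\to\infty$, which is legitimate when $\bbE[\tau]<\infty$, the only case in which the claim is meaningful and the case $\tau = n_j(T)\le T$ used in the paper) yields $\bbE[\sum_{t=1}^{\tau}D_{t+1}] = 0$. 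Therefore
\[
\big|\,\bbE[N(x,\tau)] - \pi_x\bbE[\tau]\,\big| = \big|\,\bbE[h(X_1)-h(X_{\tau+1})]\,\big| \le \max_{y} h(y) - \min_{y} h(y) =: K,
\]
a constant that depends only on $P$ through $h$, as asserted.

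It remains to bound the oscillation of $h$ by $1/\pi_{min}$, and this is the step I expect to be the main obstacle. Using the fundamental-matrix identity relating $h$ to mean first-passage times, $h(x) - h(y) = \pi_x\, m_{y,x}$ with $m_{y,x} := \bbE_{y}[\inf\{t\ge 1: X_t=x\}]$, so that $h$ is maximized at $x$ and $K = \pi_x\max_{y} m_{y,x}$. The renewal interpretation makes the target constant plausible: $\bbE[N(x,\tau)]$ and $\pi_x\bbE[\tau]$ can differ by at most the contribution of a single incomplete regeneration cycle between consecutive visits to $x$, and such a cycle has expected length $m_{x,x} = 1/\pi_x \le 1/\pi_{min}$. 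Turning this intuition into the clean inequality $\pi_x\max_{y} m_{y,x}\le 1/\pi_{min}$ — a uniform bound on the deviation-matrix entries by the minimal stationary mass — is the delicate quantitative part; I would establish it either through standard hitting-time estimates for finite irreducible chains, or directly by a regeneration/Wald argument in which $\tau$ is sandwiched between the $N(x,\tau)$-th and $(N(x,\tau)+1)$-th visits to $x$. This is the only place where the finiteness of $\mathcal{X}$ and the specific value $\pi_{min}$ enter quantitatively.
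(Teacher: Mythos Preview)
The paper does not prove this lemma at all: it is stated in the appendix with the citation \cite{AnVaWa87b} and used as a black box (to derive Lemma~\ref{lemma_AnVa2} and its multi-player analogue). So there is no ``paper's own proof'' to compare against; what you have written is a self-contained argument for a result the authors simply import.

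That said, your approach is the standard one and is essentially how the cited reference proceeds: solve Poisson's equation for the centered indicator, obtain the martingale/telescoping decomposition, use independence of $\mathcal{G}$ from $\mathcal{F}$ to keep the Markov one-step prediction valid under the enlarged filtration $\mathcal{F}_t\vee\mathcal{G}$, apply optional stopping, and reduce everything to a bound on the oscillation of $h$. Your identification of $K=\pi_x\max_y m_{y,x}$ via the fundamental-matrix/hitting-time identity $Z_{xx}-Z_{yx}=\pi_x m_{y,x}$ is correct, and you are right that the only genuinely nontrivial step left is the uniform inequality $\pi_x\max_y m_{y,x}\le 1/\pi_{min}$. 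In \cite{AnVaWa87b} this is obtained through the regeneration/Wald route you mention as your second option (sandwiching $\tau$ between consecutive returns to $x$ and bounding the boundary excursion by the mean return time $m_{x,x}=1/\pi_x\le 1/\pi_{min}$), rather than via a direct estimate on deviation-matrix entries; that argument also handles the initial segment before the first visit to $x$ and is somewhat cleaner than trying to bound $\pi_x\max_y m_{y,x}$ abstractly. Either way, your outline is sound and matches the literature.
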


\begin{lemma}\cite{Le98}
\label{lemma_Le}
Denote $N_{\lambda} =  \left \lVert \left( \frac{\lambda_{x}}{\pi_{x}}, x \in \mathcal{X} \right) \right \rVert_{2}$. Let $\rho$ be the  eigenvalue gap, $1 - \lambda_{2}$, where $\lambda_{2}$ is the second largest eigenvalue of the matrix $P^{2}$. Let $f:\mathcal{X} \rightarrow \mathbf{R}$ be such that $\sum_{x \in \mathcal{X} } \pi_{x} f(x) = 0$, $\left \lVert f \right \rVert_{\infty} \leq 1, {\left \lVert f \right \rVert}^{2}_{2} \leq 1$. Then, for any $ \gamma > 0$, 
\(\mathbb{P}\left( \sum_{a=1}^{t} f(X_{a})/t \geq \gamma \right) \leq N_{\lambda} e^{-t\rho \gamma^{2} /28}.\)
\end{lemma}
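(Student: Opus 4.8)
The statement is Lezaud's Chernoff-type deviation bound for reversible chains, so the plan is to run the exponential (Chernoff) method, turning the tail probability into a moment generating function that is then controlled spectrally. First I would apply Markov's inequality to the exponential: for every $\theta>0$,
\[
\mathbb{P}\!\left(\tfrac{1}{t}\sum_{a=1}^{t} f(X_a)\ge \gamma\right)\le e^{-\theta t\gamma}\,\mathbb{E}_{\lambda}\!\left[\exp\!\Big(\theta\sum_{a=1}^{t} f(X_a)\Big)\right].
\]
The second factor is the quantity to estimate. Writing $E_\theta=\mathrm{diag}(e^{\theta f(x)})$, the MGF is the bilinear form $\mathbb{E}_{\lambda}[e^{\theta\sum_a f(X_a)}]=\langle \lambda, E_\theta (P E_\theta)^{t-1}\mathbf{1}\rangle$, i.e. a power of the perturbed matrix $P E_\theta$. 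The whole argument reduces to showing that the principal growth rate of this matrix is at most $\beta(\theta)^t$ with $\log\beta(\theta)$ small and quadratic in $\theta$.

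Second, I would exploit reversibility. Since $\pi_x p_{x,x'}=\pi_{x'}p_{x',x}$, the conjugation by $\Pi^{1/2}$ (with $\Pi=\mathrm{diag}(\pi_x)$) makes $\Pi^{1/2}P\Pi^{-1/2}$ symmetric, and the same conjugation sends $PE_\theta$ to the symmetric matrix $S_\theta:=E_\theta^{1/2}\,\Pi^{1/2}P\Pi^{-1/2}\,E_\theta^{1/2}$. Let $\beta(\theta)$ be its largest eigenvalue. A Cauchy--Schwarz estimate of the boundary vectors against the stationary distribution then separates the contribution of the initial law from the dynamics, yielding $\mathbb{E}_{\lambda}[e^{\theta\sum_a f(X_a)}]\le N_\lambda\,\beta(\theta)^{t}$; this is exactly where the prefactor $N_\lambda=\|(\lambda_x/\pi_x)\|_2$ enters, measuring how far the start is from $\pi$.

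Third --- and this is the crux --- I would bound $\beta(\theta)$ by perturbation theory of the self-adjoint operator $P$ on $L^2(\pi)$. Diagonalising $P$ with eigenvalues $1=\beta_1>\beta_2\ge\cdots\ge\beta_n\ge-1$ and $\pi$-orthonormal eigenfunctions $\phi_j$ (so $\phi_1\equiv1$), analyticity of the top eigenvalue of $S_\theta$ together with $\sum_x\pi_x f(x)=0$, which kills the linear term since $\langle f,\phi_1\rangle_\pi=0$, gives
\[
\log\beta(\theta)\le \frac{\theta^2}{2}\sum_{j\ge 2}\langle f,\phi_j\rangle_\pi^2\,\frac{1+\beta_j}{1-\beta_j}+R(\theta),
\]
where the quadratic coefficient is precisely the asymptotic variance of $f$ and $R(\theta)$ is the higher-order remainder. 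The elementary inequality $\tfrac{1+\beta_j}{1-\beta_j}=\tfrac{(1+\beta_j)^2}{1-\beta_j^2}\le\tfrac{4}{1-\lambda_2}=\tfrac{4}{\rho}$, valid for every $j\ge2$ because $\beta_j^2\le\lambda_2$, is exactly why the statement is phrased through $\rho=1-\lambda_2$ (the gap of $P^2$): it simultaneously tames eigenvalues near $+1$ and near $-1$, which the gap of $P$ alone cannot. Together with $\|f\|_2^2\le1$ this bounds the variance by $4/\rho$, while $\|f\|_\infty\le1$ bounds $R(\theta)$ uniformly for $\theta$ in a suitable range, giving $\log\beta(\theta)\le c\,\theta^2/\rho$ for an explicit $c$.

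Finally I would substitute back and optimise: $\mathbb{P}(\cdot)\le N_\lambda\exp(-\theta t\gamma+ c\,t\theta^2/\rho)$, and choosing $\theta$ of order $\rho\gamma$ produces the exponent $-t\rho\gamma^2$ up to a universal constant, which after bookkeeping of all numerical factors in the remainder estimate collapses to the stated $-t\rho\gamma^2/28$. The hard part will be Step three: making the perturbation bound on $\beta(\theta)$ fully quantitative with explicit, uniform control of the remainder $R(\theta)$. The delicate accounting there --- rather than any single clean inequality --- is what forces the constant $28$, and it is the only place where all three hypotheses on $f$ and the spectral gap of $P^2$ are used at once.
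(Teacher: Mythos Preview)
The paper does not prove this lemma: it is quoted verbatim from Lezaud~\cite{Le98} and stated in the appendix without argument, so there is no ``paper's own proof'' to compare against. Your sketch is in fact a faithful outline of Lezaud's original proof --- the exponential Chebyshev step, the symmetrisation via reversibility to reduce to the top eigenvalue $\beta(\theta)$ of the tilted operator, the Kato-type perturbation expansion killing the linear term because $\langle f,1\rangle_\pi=0$, and the observation that working with the gap of $P^2$ rather than $P$ handles negative eigenvalues. As a plan it is correct; the only substantive work you have deferred is the quantitative control of the higher-order remainder $R(\theta)$ uniformly in the spectrum, which is where Lezaud spends most of his effort and where the constant $28$ is actually produced.
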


\end{document}